\newtheorem{definition}{Definition}[section]
\newtheorem{lemma}[definition]{Lemma}
\newtheorem{remark}[definition]{Remark}
\newtheorem{theor}[definition]{Theorem}
\newtheorem{corollary}[definition]{Corollary}
\newcommand{\E}{\mathbb{E}}   
\newcommand{\Pro}{\mathbb{P}} 
\newcommand{\Var}{\text{Var }}
\newcommand{\N}{\mathbb{N}}   
\newcommand{\Q}{\mathbb{Q}}   
\newcommand{\R}{\mathbb{R}}   
\begin{document}



\begin{center}
\Huge
{Exact Edgeworth expansion for a L\'{e}vy process}
\end{center}
\thispagestyle{empty}
\normalsize
{Heikki J. Tikanm\"{a}ki, Helsinki University of Technology, Institute of Mathematics, P.O.Box 1100, FI-02015 TKK, Finland. E-mail: heikki.tikanmaki@tkk.fi}

\begin{abstract}
The one dimensional distribution of a L\'{e}vy process is not known in general even though its characteristic function is given by the famous L\'{e}vy-Khinchine theorem. This article gives an exact series representation for the one dimensional distribution of a L\'{e}vy process satisfying certain moment conditions. Moreover, this work clarifies an old result by Cram\'{e}r on Edgeworth expansions for the distribution function of a L\'{e}vy process.

Keywords: Asymptotic expansions, Cram\'{e}r's condition, cumulants, Edgeworth approximation, L\'{e}vy process

AMS subject classification: 60G51, 60E07, 60G50.
\end{abstract}

\newpage
\section{Introduction}
\label{intro}
The L\'{e}vy-Khinchine theorem gives the characteristic function of a L\'{e}vy process. In spite of this, the distribution of a L\'{e}vy process is not analytically known, except in few special cases such as the Brownian motion, the Poisson process and the gamma process. For example, the distribution function of the compound Poisson process is not known in general despite its popularity as a risk process in insurance applications.

This article has two contributions. First of all, this article introduces some sufficient extra conditions to get an exact Edgeworth type series representation for the one dimensional distribution of a L\'{e}vy process in the presence of all moments. Secondly, this paper goes beyond an old result on Edgeworth approximation introduced by~\citet{cramer} as an analogue to the i.i.d. sum case. This article clarifies the connection between the distribution functions of L\'{e}vy processes and classical approximation results of sums of independent random variables. As a consequence, we will give an approximation method for the distribution of spectrally positive (negative) L\'{e}vy processes. This kind of processes are widely used in modern insurance models, see e.g.~\citet{kluppelberg}.

Beside the insurance applications, the results of this article could be applicable in the simulations of L\'{e}vy processes. In fact, the classical Edgeworth approximation has been used for getting error estimates for simulations of the small jumps of a L\'{e}vy process in~\citet{asmussenrosinski}. Moreover, the exact series representation might be useful tool for the study of theoretical properties of L\'{e}vy processes.

There are lots of approximation results in the literature. The normal approximation approximates well asymptotically the distribution function of a L\'{e}vy process when $t \rightarrow \infty$ if the third moment exists, see for instance \citet{valkeila}. Several authors have considered asymptotic expansions in the central limit theorem (Edgeworth approximation) for the sums of independent random variables to improve the normal approximation, see e.g. \citet{petrov1} or \citet{cramer}. These approximation methods are also well known in statistics and insurance mathematics (see~\citep{beard, kolassa}). Another approximation result (Theorem~\ref{cramerlevycase} here) is introduced for the distribution function of a L\'{e}vy process by~\citet{cramer} as an analogue to the i.i.d. sum result. This is the starting point of the research presented in this article.
\section{Definitions}
In this section, we define the concepts needed in the rest of the article.

Let us consider a probability space $(\Omega, \mathcal{F},\Pro)$. Let $X$ be a real valued random variable defined on this space. Let $v_X(s)=\E e^{isX}$ denote the characteristic function of $X$.
\label{definitions}
\begin{definition}[Cram\'{e}r's condition]
A random variable $X$ is said to satisfy Cram\'{e}r's condition if
\begin{equation*}
\limsup_{|s|\rightarrow \infty}\left | v_X(s)\right |<1.
\end{equation*}
\end{definition}
Remark~\ref{cramerremark} characterises Cram\'{e}r's condition in the case of L\'{e}vy processes.
\begin{definition}[Cumulants]
Let $k\in \N=\{1,2,\dots\}$. The cumulant of order $k$ of a random variable $X$ is defined as
\begin{equation*}
\gamma_{k}^{X}=\frac{1}{i^k}\left [ \frac{d^k}{ds^k}\log v_{X}(s)\right ]_{s=0}.
\end{equation*}
\end{definition}
Note that the cumulant of $X$ of order $k$ is finite if we have $\E |X|^k<\infty$.

We use the following definition for the (non-normalised) Hermite polynomial of order $n \in \N$
\begin{equation*}
H_n(x)=(-1)^n e^\frac{x^2}{2}\frac{d^n}{dx^n}e^{-\frac{x^2}{2}}.
\end{equation*}
This choice of the definition makes the series representation much simpler than the normalised one. The same choice is done e.g. by \citet{petrov1,kolassa}. With this definition one gets the identities
\begin{align*}
&H_{n+1}(x)=xH_n(x)-nH_{n-1}(x),\\
&H'_n(x)=nH_{n-1}(x)\quad \text{and}\\
&H_n(-x)=(-1)^nH_n(x)
\end{align*}
analogous to those in~\citet{nualart}.

We set $V_X^2=E X^2$. Let $\nu \in \N$ s.t. $\E |X|^{\nu+2}<\infty$. We are now ready to define the approximating function $Q_\nu^{X}$ to be used in the series approximations. We set
\begin{equation}
\label{Q}
Q_\nu^{X}(x)=-\frac{1}{\sqrt{2\pi}}e^{-\frac{x^2}{2}}\sum H_{\nu+2l-1}(x)\prod_{m=1}^\nu \frac{1}{k_m!}\left ( \frac{\gamma_{m+2}^X}{(m+2)!V_X^{m+2}}\right)^{k_m},
\end{equation}
where the summation is extended over the non-negative integer solutions $(k_1,\dots,k_\nu)$ of the equation $k_1+2k_2+\dots+\nu k_\nu=\nu$. Here we have $l=\sum_{j=1}^\nu k_j$. The first few of these functions are
\begin{align*}
Q_1^{X}(x)=&-\frac{1}{\sqrt{2\pi}}e^{-\frac{x^2}{2}}(x^2-1)\frac{\gamma_{3}^X}{6 V_X^3},\\
Q_2^{X}(x)=&-\frac{1}{\sqrt{2\pi}}e^{-\frac{x^2}{2}}((x^5-10x^3+15x)\frac{(\gamma_3^X)^2}{72 V_X^6}+(x^3-3x)\frac{\gamma_4^X}{24V_X^4}),\\
Q_3^{X}(x)=&-\frac{1}{\sqrt{2\pi}}e^{-\frac{x^2}{2}}((x^8-28x^6+210x^4-420x^2+105)\frac{(\gamma_3^X)^3}{1296V_X^9}+\\&(x^6-15x^4+45x^2-15)\frac{\gamma_3^X \gamma_4^X}{144V_X^7}+(x^4-6x^2+3)\frac{\gamma_5^X}{120V_X^5}).
\end{align*}
The approximating function of order zero is the cumulative distribution function of the standard normal distribution $\Phi(x)$.

In the remaining of this article the process $X=(X_t)_{t\geq 0}$ is assumed to be a L\'{e}vy process on $\R$. The standard definition for L\'{e}vy processes can be found for instance from~\citet{bertoin} or \citet{kyprianou}. The approximation results are written for centered processes i.e. $\E X_1=0$.

We use the following version of the L\'{e}vy-Khinchine theorem to represent the characteristic function $v_{X_t}(s)$. The theorem can be found in one form or another for example in \citet{bertoin,cont,sato}.
\begin{theor}[L\'{e}vy-Khinchine]
There are unique $\sigma^2\geq 0$, $\rho\in \R$ and a Radon measure $\mu$ on $\R \backslash \{0\}$ satisfying
\begin{equation*}
\int_{\R \backslash \{0\}}\min{(u^2,1)}d\mu(u)<\infty
\end{equation*}
 such that
\begin{equation*}
\psi(s)=-\frac{1}{2}\sigma^2 s^2+i\rho s+\int_{\R \backslash \{0\}}(e^{isu}-1-isu1_{\{|u|\leq 1\}})\mu(du)
\end{equation*}
and
\begin{equation*}
v_{X_t}(s)=e^{t\psi(s)}.
\end{equation*}
\end{theor}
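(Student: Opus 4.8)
The plan is to establish the statement in three stages: (i) that $v_{X_t}(s)=e^{t\psi(s)}$ for some continuous function $\psi$ with $\psi(0)=0$; (ii) that any such $\psi$ admits the stated integral representation, i.e. existence of the triple $(\sigma^2,\rho,\mu)$; and (iii) that the triple is unique.

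For stage (i) I would use only the defining properties of a L\'evy process. Stationarity and independence of increments give the multiplicative identity $v_{X_{t+u}}(s)=v_{X_t}(s)\,v_{X_u}(s)$ for every $s$, and stochastic continuity makes $t\mapsto v_{X_t}(s)$ continuous with $v_{X_0}(s)=1$. Since $v_{X_{t/n}}(s)^n=v_{X_t}(s)$, a vanishing value $v_{X_{t_0}}(s)=0$ would force $v_{X_{t_0/n}}(s)=0$ for all $n$, contradicting $v_{X_{t_0/n}}(s)\to 1$; hence $t\mapsto v_{X_t}(s)$ is continuous, nowhere zero, and multiplicative, so it equals $e^{t\psi(s)}$ with $\psi(s):=\log v_{X_1}(s)$ taken along the continuous branch of the logarithm and $\psi(0)=0$.

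Stage (ii) is the substantive part and the place where I expect the main difficulty. I would run the L\'evy--It\^o decomposition. Let $N(dt,du)=\sum_{s>0}\mathbf{1}_{\{\Delta X_s\neq 0\}}\,\delta_{(s,\Delta X_s)}$ be the jump measure of the process; independence and stationarity of increments force $N$ to be a Poisson random measure on $(0,\infty)\times(\R\setminus\{0\})$ with intensity $dt\,\mu(du)$ for some Radon measure $\mu$, and the a.s. finiteness of $\sum_{s\le 1}\min(\Delta X_s^2,1)$ (a consequence of the c\`adl\`ag paths) yields $\int_{\R\setminus\{0\}}\min(u^2,1)\,d\mu(u)<\infty$. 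One then decomposes $X_t$ into three independent pieces whose exponents are computed directly: the compound Poisson process of jumps of modulus $>1$, with exponent $\int_{|u|>1}(e^{isu}-1)\,d\mu(u)$ (using $\mu(\{|u|>1\})<\infty$); the $L^2$-limit of the compensated small-jump martingales, with exponent $\int_{|u|\le 1}(e^{isu}-1-isu)\,d\mu(u)$ (both the convergence of the sum and the need for the linear correction near the origin coming from $\int_{|u|\le1}u^2\,d\mu<\infty$); and a remaining continuous-path L\'evy process which, by L\'evy's characterisation of Brownian motion, is $\sigma W_t+\rho t$ with exponent $-\tfrac12\sigma^2 s^2+i\rho s$. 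Summing the three exponents yields $\psi$. The delicate points are proving that $N$ is genuinely a Poisson random measure with a deterministic $\sigma$-finite intensity, the $L^2$-convergence of the compensated small-jump sum, and invoking L\'evy's theorem for the continuous part. A purely analytic alternative is to prove the formula first for infinitely divisible laws (each $X_t$ is infinitely divisible, being a sum of $n$ i.i.d.\ increments for every $n$) by extracting a vague limit of the finite measures $n\,\Pro(X_{1/n}\in du)$ as $n\to\infty$, the truncation term $isu\mathbf{1}_{\{|u|\le1\}}$ being inserted precisely so that the integrand is integrable at the origin while a Gaussian component is recovered from the mass concentrating near $0$.

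Stage (iii) is then routine. If $(\sigma^2,\rho,\mu)$ and $(\tilde\sigma^2,\tilde\rho,\tilde\mu)$ both represent the same $\psi$, one forms a second difference such as $2\psi(s)-\psi(s+h)-\psi(s-h)$ to eliminate the drift term and damp the Gaussian term, and integrates against suitable test functions (or applies Fourier inversion) to conclude $\mu=\tilde\mu$ on every set bounded away from $0$, hence $\mu=\tilde\mu$; the asymptotics of $\psi(s)/s^2$ as $|s|\to\infty$ then give $\sigma^2=\tilde\sigma^2$, and the remaining linear term gives $\rho=\tilde\rho$.
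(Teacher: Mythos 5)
The paper does not prove this statement at all: it is quoted as a classical result, with the reader referred to Bertoin, Cont--Tankov and Sato for a proof, so there is no ``paper's own proof'' to compare against. Your outline is, in essence, the standard textbook argument found in those references: the semigroup/continuity argument for the existence of the distinguished logarithm $\psi$, the L\'evy--It\^o decomposition (large-jump compound Poisson part, $L^2$-limit of compensated small jumps, continuous Gaussian-plus-drift remainder) to produce the triplet, and the second-difference/Fourier-uniqueness argument combined with the $|s|\to\infty$ asymptotics of $\psi(s)/s^2$ to get uniqueness. All three stages are correct in outline, and you have honestly flagged where the real work lies (showing the jump measure is a Poisson random measure with deterministic intensity, the $L^2$-convergence of the compensated small-jump martingales, and L\'evy's characterisation for the continuous part); your alternative analytic route via infinite divisibility and vague limits of $n\,\Pro(X_{1/n}\in du)$ is equally standard and would also work. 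Since this is a cited background theorem rather than a contribution of the paper, the appropriate move in this context is simply to cite it as the author does; if you did want to write the proof out, the main expansion needed beyond your sketch is the construction of the Poisson random measure from the c\`adl\`ag paths and the uniform-in-$t$ control needed for the compensated small-jump limit.
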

The measure $\mu$ is called the L\'{e}vy measure of $X$ and $(\sigma^2,\rho,\mu)$ is the characteristic triplet of $X$.
\begin{remark}
\label{cramerremark}
The random variable $X_t$ satisfies Cram\'{e}r's condition if the law of $X_t$ has absolutely continuous component w.r.t. Lebesgue measure. This follows e.g. if $\sigma^2 \neq 0$ or $\mu$ has absolutely continuous component w.r.t. Lebesgue measure.

Moreover, if $X_t$ satisfies Cram\'{e}r's condition for some $t>0$, then $X_t$ satisfies the same condition for all $t>0$ by the L\'{e}vy-Khinchine theorem.
\end{remark}
\section{Approximation results}
\label{results}
In the literature, there are lots of classical asymptotic expansion results for the i.i.d. sum case. I.i.d. sums are in some sense the discrete time analogues of L\'{e}vy processes. The following theorem is presented in~\citet{petrov1}.
\begin{theor}
\label{iid-case}
Let $\{Y_j\}_{j=1}^n$ be a sequence of i.i.d. random variables satisfying Cram\'{e}r's condition s.t. $\E Y_1=0$ and $\E |Y_1|^k<\infty$ for some integer $k\geq 3$. Then
\begin{equation*}
\Pro\left ( \sum_{j=1}^n Y_j < \sqrt{n}V_{Y_1}x\right )=\Phi(x)+\sum_{\nu=1}^{k-2}Q_\nu^{Y_1}(x)n^{-\frac{\nu}{2}}+o\left(n^{-\frac{k-2}{2}}\right)
\end{equation*}
uniformly in $x\in \R$.
\end{theor}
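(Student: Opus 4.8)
The plan is to prove Theorem~\ref{iid-case} by the classical Cram\'er--Esseen argument: combine Esseen's smoothing inequality with a Taylor expansion of the logarithm of the characteristic function near the origin, and handle the remaining frequencies separately using Cram\'er's condition. Write $S_n=\sum_{j=1}^n Y_j$, $V=V_{Y_1}$, let $F_n$ be the distribution function of $S_n/(\sqrt n\,V)$, and let $G_n(x)=\Phi(x)+\sum_{\nu=1}^{k-2}Q_\nu^{Y_1}(x)n^{-\nu/2}$ be the target; each $Q_\nu^{Y_1}$ is finite since $\E|Y_1|^{\nu+2}<\infty$ for $\nu\le k-2$, and $G_n$ has bounded variation with $G_n(-\infty)=0$, $G_n(+\infty)=1$ because $Q_\nu^{Y_1}$ is a Gaussian density times a polynomial with $Q_\nu^{Y_1}(\pm\infty)=0$ and $\int_{\R}(Q_\nu^{Y_1})'=0$. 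By independence, $\widehat F_n(s)=v_{Y_1}\!\left(s/(\sqrt n\,V)\right)^{\!n}$, and using the Hermite identities from Section~\ref{definitions} together with the fact that the Fourier transform of $H_m(x)e^{-x^2/2}/\sqrt{2\pi}$ equals $(is)^m e^{-s^2/2}$, the Fourier--Stieltjes transform of $G_n$ is $\widehat G_n(s)=e^{-s^2/2}\big(1+\sum_{\nu=1}^{k-2}R_\nu(is)n^{-\nu/2}\big)$ for certain polynomials $R_\nu$.

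First I would apply Esseen's smoothing inequality to bound $\sup_x|F_n(x)-G_n(x)|$ by $c\int_{-T_n}^{T_n}|\widehat F_n(s)-\widehat G_n(s)|\,|s|^{-1}\,ds+c'\,T_n^{-1}\sup_x|G_n'(x)|$, and choose $T_n$ to grow polynomially but faster than $n^{(k-2)/2}$, say $T_n=n^{(k-1)/2}$; since $\sup_x|G_n'(x)|$ is bounded uniformly in $n$, the boundary term is $o(n^{-(k-2)/2})$. It then remains to estimate the integral, which I would split into a central part $|s|\le\delta\sqrt n$ (small fixed $\delta$) and a tail $\delta\sqrt n<|s|\le T_n$.

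On the central part I would expand $\log v_{Y_1}(t)=\sum_{j=2}^{k}\frac{\gamma_j^{Y_1}}{j!}(it)^j+o(t^k)$ as $t\to0$ (valid since $\gamma_1^{Y_1}=\E Y_1=0$ and $\E|Y_1|^k<\infty$), substitute $t=s/(\sqrt n\,V)$, use $\gamma_2^{Y_1}=V^2$, exponentiate, and regroup in powers of $n^{-1/2}$. The principal terms of $\widehat F_n(s)$ are then exactly $e^{-s^2/2}\big(1+\sum_{\nu=1}^{k-2}R_\nu(is)n^{-\nu/2}\big)$: the combinatorial sum over nonnegative solutions of $k_1+2k_2+\dots+\nu k_\nu=\nu$ in~\eqref{Q} is precisely what records the multinomial expansion of $\exp$ of the cumulant series, the power of $n^{-1/2}$ in a term being $\sum_m mk_m=\nu$ and the power of $(is)$ being $\sum_m(m+2)k_m=\nu+2l$, so $R_\nu$ is the Fourier transform of $Q_\nu^{Y_1}$. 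Using $|v_{Y_1}(t)|\le e^{-c t^2}$ for small $|t|$ (so $|\widehat F_n(s)|\le e^{-s^2/4}$ on $|s|\le\delta\sqrt n$) to dominate the error, and choosing $\delta$ small in terms of a prescribed $\varepsilon$, the difference $\widehat F_n(s)-\widehat G_n(s)$ on this range is $\le\varepsilon\,n^{-(k-2)/2}\pi(|s|)e^{-s^2/4}$ for $n$ large, with $\pi$ a fixed polynomial; dividing by $|s|$ and integrating contributes $o(n^{-(k-2)/2})$.

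The tail $\delta\sqrt n<|s|\le T_n$ is where Cram\'er's condition enters and is, I expect, the main technical point. With $t=s/(\sqrt n\,V)$ this range corresponds to $\delta/V\le|t|\le B_n$ with $B_n\to\infty$. Since $Y_1$ satisfies Cram\'er's condition it is non-lattice (a lattice variable would have $\limsup_{|t|\to\infty}|v_{Y_1}(t)|=1$), hence $|v_{Y_1}(t)|<1$ for all $t\neq0$; by continuity $q_A:=\sup_{\delta/V\le|t|\le A}|v_{Y_1}(t)|<1$ for each $A$, and by Cram\'er's condition there are $A_0$ and $q_\infty<1$ with $|v_{Y_1}(t)|\le q_\infty$ for $|t|\ge A_0$. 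Taking $q=\max(q_{A_0},q_\infty)<1$ gives $|\widehat F_n(s)|=|v_{Y_1}(t)|^n\le q^{\,n}$ on the whole tail, so its contribution to the integral is at most $c\,q^{\,n}\log T_n=o(n^{-(k-2)/2})$, while the $\widehat G_n$ contribution there is super\-exponentially small because $e^{-s^2/2}$ times a polynomial is tiny for $|s|\ge\delta\sqrt n$. Combining the three estimates yields the theorem, uniformly in $x$; the error is $o$ rather than $O$ precisely because the Taylor remainder of $\log v_{Y_1}$ is $o(t^k)$, which is available under the hypothesis $\E|Y_1|^k<\infty$.
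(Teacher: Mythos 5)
The paper offers no proof of this statement: it is quoted as a classical result from Petrov (1995), so there is nothing internal to compare against. Your argument --- Esseen's smoothing inequality with $T_n=n^{(k-1)/2}$, the Taylor expansion of $\log v_{Y_1}$ regrouped in powers of $n^{-1/2}$ on $|s|\le\delta\sqrt n$, and the observation that Cram\'er's condition forces $\sup_{|t|\ge \delta/V}|v_{Y_1}(t)|<1$ on the tail --- is precisely the standard Cram\'er--Esseen proof found in that reference, and it is correct in outline.
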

This kind of results are presented also in~\citet{petrov2,kolassa,cramer}. Generalisation of Theorem~\ref{iid-case} is presented by~\citet{cramer} as an analogue to corresponding i.i.d. sum result:
\begin{theor}
\label{cramerlevycase}
Let $X_1$ satisfy Cram\'{e}r's condition, $\E X_1=0$ and $k\geq 3$ be such an integer that $\E |X_1|^k<\infty$. Then 
\begin{equation*}
\Pro(X_t<xV_{X_t})=\Phi(x)+\sum_{\nu=1}^{k-3}Q^{X_1}_\nu(x)t^{-\frac{\nu}{2}}+O\left(t^{-\frac{k-2}{2}}\right).
\end{equation*}
\end{theor}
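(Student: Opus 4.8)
\emph{Proof plan.} The strategy is to run the classical Fourier-analytic (characteristic-function) proof of the i.i.d. Edgeworth expansion, Theorem~\ref{iid-case}, with the sample size $n$ replaced by the continuous parameter $t$, exploiting that for a centred L\'evy process both the cumulants and the variance scale linearly in $t$: from $\log v_{X_t}(s)=t\psi(s)$ and $\E X_1=0$ one gets $\gamma_j^{X_t}=t\,\gamma_j^{X_1}$ and $V_{X_t}^2=\E X_t^2=t\,V_{X_1}^2$. Put $Z_t:=X_t/V_{X_t}=X_t/(\sqrt t\,V_{X_1})$ and $F_t(x):=\Pro(Z_t\le x)$, so that
\begin{equation*}
f_t(s):=\E e^{isZ_t}=v_{X_t}\!\left(\tfrac{s}{\sqrt t\,V_{X_1}}\right)=\exp\!\left(t\,\psi\!\left(\tfrac{s}{\sqrt t\,V_{X_1}}\right)\right).
\end{equation*}

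First I would establish the local (cumulant) expansion. Since $\E|X_1|^k<\infty$, the function $v_{X_1}$ is $k$ times continuously differentiable and $v_{X_1}(0)=1$, so $\psi=\log v_{X_1}$ is of class $C^k$ on some $[-\delta_0,\delta_0]$; Taylor's theorem gives $\psi(u)=-\tfrac12V_{X_1}^2u^2+\sum_{j=3}^{k}\tfrac{\gamma_j^{X_1}}{j!}(iu)^j+r(u)$ with $|r(u)|\le|u|^k\eta(|u|)$ and $\eta(v)\to0$ as $v\to0$. Substituting $u=s/(\sqrt t\,V_{X_1})$ and multiplying by $t$,
\begin{equation*}
t\,\psi\!\left(\tfrac{s}{\sqrt t\,V_{X_1}}\right)=-\tfrac{s^2}{2}+\sum_{j=3}^{k}\frac{\gamma_j^{X_1}}{j!\,V_{X_1}^{j}}(is)^j\,t^{-\frac{j-2}{2}}+R_t(s),\qquad |R_t(s)|\le C\,|s|^k\,t^{-\frac{k-2}{2}}\quad(|s|\le\delta_0\sqrt t\,V_{X_1}).
\end{equation*}
Exponentiating and collecting terms by powers of $t^{-1/2}$ produces the polynomials $P_\nu(is)=\sum(is)^{\nu+2l}\prod_{m=1}^{\nu}\tfrac{1}{k_m!}\big(\tfrac{\gamma_{m+2}^{X_1}}{(m+2)!\,V_{X_1}^{m+2}}\big)^{k_m}$, the inner sum over non-negative solutions of $k_1+2k_2+\dots+\nu k_\nu=\nu$ with $l=\sum_jk_j$ --- exactly the combinatorial data defining $Q_\nu^{X_1}$ in~\eqref{Q}. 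Using $\widehat\phi(s)=e^{-s^2/2}$, $\widehat{H_n\phi}(s)=(is)^ne^{-s^2/2}$ and the identity $\tfrac{d}{dx}\big(e^{-x^2/2}H_m(x)\big)=-e^{-x^2/2}H_{m+1}(x)$, one checks that $e^{-s^2/2}P_\nu(is)$ is the Fourier--Stieltjes transform of $Q_\nu^{X_1}$; hence $G_t(x):=\Phi(x)+\sum_{\nu=1}^{k-3}Q_\nu^{X_1}(x)\,t^{-\nu/2}$ is continuous, has Fourier--Stieltjes transform $g_t(s):=e^{-s^2/2}\big(1+\sum_{\nu=1}^{k-3}P_\nu(is)\,t^{-\nu/2}\big)$, and $\sup_x|G_t'(x)|$ is bounded uniformly in $t\ge1$.

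The core is then the Esseen smoothing inequality: for a suitable absolute constant $b$ and every $T>0$,
\begin{equation*}
\sup_x\,|F_t(x)-G_t(x)|\;\le\;b\int_{-T}^{T}\left|\frac{f_t(s)-g_t(s)}{s}\right|ds\;+\;\frac{C'}{T}\,\sup_x|G_t'(x)|,
\end{equation*}
applied with $T=t^{(k-2)/2}$, so the last term is $O(t^{-(k-2)/2})$. The integral splits over three ranges. (i) For $|s|\le\alpha t^{1/6}$ with $\alpha$ small, $\theta_t(s):=t\psi(s/(\sqrt t\,V_{X_1}))+s^2/2$ satisfies $|\theta_t(s)|\le\tfrac12$, and expanding $e^{\theta_t}$, truncating after the $\nu=k-3$ term, and using the remainder bound gives $|f_t(s)-g_t(s)|\le C\,t^{-(k-2)/2}\,|s|\,p_k(|s|)\,e^{-s^2/4}$ for a fixed polynomial $p_k$, which integrates against $1/|s|$ to $O(t^{-(k-2)/2})$. (ii) For $\alpha t^{1/6}\le|s|\le\epsilon\sqrt t\,V_{X_1}$ the inequality $\Re\psi(u)\le-\tfrac14V_{X_1}^2u^2$, valid for $|u|\le\delta_0$ after shrinking $\delta_0$ and $\epsilon$, gives $|f_t(s)|\le e^{-s^2/4}$, while $|g_t(s)|$ is a polynomial times $e^{-s^2/2}$; both contribute $O(e^{-c\,t^{1/3}})$. (iii) For $\epsilon\sqrt t\,V_{X_1}\le|s|\le T$, Cram\'er's condition for $X_1$ --- together with continuity of $|v_{X_1}|$, the fact that $|v_{X_1}(u)|<1$ for $u\neq0$ (as $X_1$ is neither degenerate nor lattice under that condition), and $\limsup_{|u|\to\infty}|v_{X_1}(u)|<1$ --- yields $\sup_{|u|\ge\epsilon}|v_{X_1}(u)|=e^{-\lambda}<1$, so $|f_t(s)|=|v_{X_1}(s/(\sqrt t\,V_{X_1}))|^{t}\le e^{-\lambda t}$ and this range contributes $O(e^{-\lambda t}\log t)$ (the $g_t$ part being Gaussian-small). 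Combining the three pieces with the smoothing term gives $\sup_x|F_t(x)-G_t(x)|=O(t^{-(k-2)/2})$; since $G_t$ is continuous, $|\Pro(X_t<xV_{X_t})-G_t(x)|=|F_t(x^-)-G_t(x^-)|\le\sup_y|F_t(y)-G_t(y)|$, which is the assertion.

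I expect step~(i) to be the main obstacle: one must make the passage from the Taylor expansion of $t\psi(s/(\sqrt t\,V_{X_1}))$ to the truncated Edgeworth polynomial uniform in $s$ on the growing window $|s|\le\alpha t^{1/6}$, with every collected error term of genuine order $t^{-(k-2)/2}$, and simultaneously verify that the polynomials $P_\nu$ produced by that expansion coincide term by term with the Fourier--Stieltjes transforms of the functions $Q_\nu^{X_1}$ of~\eqref{Q} --- the bookkeeping that converts the linear scaling $\gamma_j^{X_t}=t\gamma_j^{X_1}$ into the stated series $\sum_\nu Q_\nu^{X_1}(x)t^{-\nu/2}$. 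By comparison the Cram\'er-condition step~(iii) is short, needing only the uniform-over-annulus bound on $|v_{X_1}|$, which is immediate from Cram\'er's condition and continuity (cf.\ Remark~\ref{cramerremark}). The whole scheme is the exact continuous-parameter counterpart of the proof of Theorem~\ref{iid-case}.
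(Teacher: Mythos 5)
The paper does not actually prove Theorem~\ref{cramerlevycase}: it is quoted from Cram\'er (1962) as a known analogue of the i.i.d.\ result, with the remark that Cram\'er gives the functions $Q_\nu^{X_1}$ only implicitly. So there is no in-paper proof to match your argument against. What you have written is a correct, direct proof by the classical characteristic-function method: Esseen's smoothing inequality with $T=t^{(k-2)/2}$, the three-range split of the frequency integral, the Taylor expansion of $t\psi(s/(\sqrt t\,V_{X_1}))$ made uniform on $|s|\le\alpha t^{1/6}$, the quadratic decay bound on the middle range, and Cram\'er's condition (which, as you note, forces $\sup_{|u|\ge\epsilon}|v_{X_1}(u)|<1$ since equality $|v_{X_1}(u_0)|=1$ at some $u_0\ne0$ would make $X_1$ lattice and violate the limsup condition) on the outer range, using $|f_t(s)|=|v_{X_1}(s/(\sqrt t\,V_{X_1}))|^t$. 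Your identification of $e^{-s^2/2}P_\nu(is)$ with the Fourier--Stieltjes transform of $Q_\nu^{X_1}$ matches the combinatorics of~(\ref{Q}) and the partition-of-$\nu$ bookkeeping the paper uses in Lemma~\ref{scaling} and in the proof of Theorem~\ref{main}. By contrast, the route the paper itself takes to the closely related Corollary~\ref{corolla} is a reduction rather than a direct Fourier argument: write $X_t$ as a sum of $n$ i.i.d.\ increments, invoke Theorem~\ref{iid-case}, and rescale via Lemma~\ref{scaling} together with a continuity argument in $t$. The reduction is shorter but hides the uniformity issues in the passage from integer $n$ to real $t$; your direct argument is self-contained, produces the explicit $Q_\nu^{X_1}$, and is essentially Cram\'er's original proof transplanted to continuous time. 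The only place that genuinely needs the careful bookkeeping you flag is step~(i) (uniform control of the exponentiated Taylor remainder on the growing window), and your description of how to handle it is the standard and correct one.
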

In fact, \citet{cramer} introduces the form for the functions $Q_\nu^{X_1}(x)$ only implicitly. See \citet{cramer} pages 72, 98 and 99.

Next we are going to present some lemmata to scale the approximating functions $Q_\nu^{X_t}(x)$ with respect to $t$. The first of them is well-known but it is included here for convenience.
\begin{lemma}
Let $k \in \N$ be s.t. $\E |X_1|^k<\infty$. Then
\begin{equation*}
\gamma_k^{X_t}=t\gamma_k^{X_1}.
\end{equation*}
\end{lemma}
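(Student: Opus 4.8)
The statement to prove is that $\gamma_k^{X_t} = t\gamma_k^{X_1}$ for all $k$ with $\E|X_1|^k < \infty$.

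The plan is to use the Lévy-Khinchine representation directly, since it gives $v_{X_t}(s) = e^{t\psi(s)}$ where $\psi$ does not depend on $t$. Taking logarithms, $\log v_{X_t}(s) = t\psi(s)$, and in particular $\log v_{X_1}(s) = \psi(s)$, so $\log v_{X_t}(s) = t \log v_{X_1}(s)$.

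Then I would apply the definition of the cumulant of order $k$:
\begin{equation*}
\gamma_k^{X_t} = \frac{1}{i^k}\left[\frac{d^k}{ds^k}\log v_{X_t}(s)\right]_{s=0} = \frac{1}{i^k}\left[\frac{d^k}{ds^k}\big(t\log v_{X_1}(s)\big)\right]_{s=0} = t\cdot\frac{1}{i^k}\left[\frac{d^k}{ds^k}\log v_{X_1}(s)\right]_{s=0} = t\gamma_k^{X_1},
\end{equation*}
using linearity of differentiation to pull the constant $t$ out. The moment condition $\E|X_1|^k < \infty$ ensures (as noted after the definition of cumulants) that $\log v_{X_1}$ is $k$ times differentiable at $0$ and hence $\gamma_k^{X_1}$ — and therefore $\gamma_k^{X_t}$ — is finite and well-defined.

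There is essentially no obstacle here; the only point requiring a word of care is the justification that $\log v_{X_t}$ is genuinely $C^k$ in a neighbourhood of the origin so that the $k$-th derivative at $0$ makes sense. This follows from the standard fact that $\E|X_t|^k < \infty$ whenever $\E|X_1|^k < \infty$ for a Lévy process (the moments of $X_t$ are polynomials in $t$ with coefficients built from the Lévy triplet), which guarantees $v_{X_t} \in C^k$ near $0$ with $v_{X_t}(0) = 1 \neq 0$, so the logarithm is also $C^k$ there. Hence the identity $\gamma_k^{X_t} = t\gamma_k^{X_1}$ holds, completing the proof.
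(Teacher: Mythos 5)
Your proof is correct, but it takes a different route from the paper. The paper does not invoke the L\'{e}vy--Khinchine representation at all: it uses only the semigroup property of the characteristic functions, first showing $\gamma_k^{X_{1/n}}=\tfrac{1}{n}\gamma_k^{X_1}$ from $v_{X_{1/n}}(s)=v_{X_1}(s)^{1/n}$, then extending to positive rationals $q=m/n$ by additivity of cumulants over independent increments, and finally passing to all real $t>0$ ``by a simple density argument.'' Your argument is more direct: since the paper has already stated $v_{X_t}(s)=e^{t\psi(s)}$ with $\psi$ independent of $t$, the identity $\log v_{X_t}(s)=t\log v_{X_1}(s)$ holds near the origin (for the distinguished logarithm) for \emph{every} real $t>0$ at once, and linearity of the $k$-th derivative finishes the job. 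This buys you two things the paper's proof glosses over: you avoid the branch issues hidden in writing $v_{X_1}(s)^{1/n}$ for a complex-valued function, and you avoid the unstated continuity of $t\mapsto\gamma_k^{X_t}$ that the density argument actually requires. Your remark that $\E|X_1|^k<\infty$ forces $\E|X_t|^k<\infty$ and hence $C^k$-smoothness of $\log v_{X_t}$ near $0$ is the right point of care, and it is needed equally in the paper's version. Both proofs are valid; yours is arguably the cleaner one given that the L\'{e}vy--Khinchine theorem is already available in the text.
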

\begin{proof}
Take $q\in \Q^+$. Now $q=\frac{m}{n}$ for some $m,n\in\N$ and
\begin{equation*}
\gamma_k^{X_{\frac{1}{n}}}=\frac{1}{i^k}\left [ \frac{d^k}{ds^k} \log v_{X_1}(s)^\frac{1}{n}\right ]_{s=0}=\frac{1}{n}\frac{1}{i^k}\left [ \frac{d^k}{ds^k}\log v_{X_1}(s)\right]_{s=0}=\frac{1}{n}\gamma_k^{X_1}.
\end{equation*}
By repeating the previous argument we get
\begin{equation*}
\gamma_k^{X_q}=m\gamma_k^{X_{\frac{1}{n}}}=\frac{m}{n}\gamma_k^{X_1}=q\gamma_k^{X_1}.
\end{equation*}
The general claim follows now by a simple density argument.
\end{proof}
\begin{lemma}
\label{scaling}
Let $\nu \in \N$ be s.t. $\E |X_1|^{\nu+2}<\infty$, then
\begin{equation*}
Q_\nu^{X_t}(x)=t^{-\frac{\nu}{2}}Q_\nu^{X_1}(x), \quad \text{for }x\in\R.
\end{equation*}
\end{lemma}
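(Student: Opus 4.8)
The plan is to substitute the two scaling relations---the one for the cumulants just proved and a companion one for $V_{X_t}$---into the defining formula~\eqref{Q} for $Q_\nu^{X_t}$, and then to track the power of $t$ produced by each term of the sum over partitions.

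First I would record that, since $\E X_1=0$ forces $\E X_t = t\,\E X_1 = 0$, the second moment of $X_t$ coincides with its second cumulant, so by the previous lemma
\[
V_{X_t}^2 = \E X_t^2 = \gamma_2^{X_t} = t\gamma_2^{X_1} = t\,V_{X_1}^2 ,
\]
and hence $V_{X_t}^{m+2} = t^{(m+2)/2}V_{X_1}^{m+2}$ for every $m\geq 1$. I would also note that $\E|X_1|^{\nu+2}<\infty$ implies $\E|X_t|^{\nu+2}<\infty$ for all $t>0$, so that all the cumulants $\gamma_{m+2}^{X_1}$ with $1\le m\le \nu$ are finite and $Q_\nu^{X_t}$ is well defined.

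Next I would isolate a single factor of the product in~\eqref{Q}. Combining the previous lemma with the displayed identity above,
\[
\frac{\gamma_{m+2}^{X_t}}{(m+2)!\,V_{X_t}^{m+2}}
 = \frac{t\,\gamma_{m+2}^{X_1}}{(m+2)!\,t^{(m+2)/2}V_{X_1}^{m+2}}
 = t^{-m/2}\,\frac{\gamma_{m+2}^{X_1}}{(m+2)!\,V_{X_1}^{m+2}} .
\]
Raising this to the power $k_m$ and multiplying over $m=1,\dots,\nu$ therefore pulls out a factor $t^{-\frac12\sum_{m=1}^\nu m k_m}$ in front of the corresponding product for $X_1$. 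Now I would invoke the constraint defining the summation range: the sum in~\eqref{Q} runs over the non-negative integer solutions of $k_1+2k_2+\dots+\nu k_\nu=\nu$, i.e. $\sum_{m=1}^\nu m k_m=\nu$ for \emph{every} term. Hence each summand of $Q_\nu^{X_t}(x)$ equals $t^{-\nu/2}$ times the corresponding summand of $Q_\nu^{X_1}(x)$; since the Hermite-polynomial factor $H_{\nu+2l-1}(x)$ and the Gaussian prefactor $-\frac{1}{\sqrt{2\pi}}e^{-x^2/2}$ do not involve $t$, the common factor $t^{-\nu/2}$ comes out of the whole sum and gives $Q_\nu^{X_t}(x)=t^{-\nu/2}Q_\nu^{X_1}(x)$.

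I do not expect any genuine obstacle here: the argument is essentially exponent bookkeeping, and the one place where the centering hypothesis $\E X_1=0$ is actually used is in identifying $V_{X_t}^2$ with the second cumulant $\gamma_2^{X_t}$. The only point that deserves a word of care is that $V_X$ is defined through the second moment rather than the variance, which is precisely why the centering assumption enters.
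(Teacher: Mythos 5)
Your proof is correct and follows essentially the same route as the paper: substitute $\gamma_{m+2}^{X_t}=t\gamma_{m+2}^{X_1}$ and $V_{X_t}=\sqrt{t}\,V_{X_1}$ into each factor of the product in~(\ref{Q}), pull out $t^{-\frac{1}{2}\sum_m mk_m}$, and use the partition constraint $\sum_m mk_m=\nu$. The only difference is that you explicitly justify $V_{X_t}^2=tV_{X_1}^2$ via the second cumulant and the centering assumption, a step the paper uses without comment.
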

\begin{proof}
By definition,
\begin{equation*}
Q_\nu^{X_t}(x)=f(x)\sum H_{\nu+2l-1}(x)\prod_{m=1}^\nu \frac{1}{k_m!}\left ( \frac{\gamma_{m+2}^{X_t}}{(m+2)!V_{X_t}^{m+2}}\right)^{k_m},
\end{equation*}
where $f(x)=-\frac{e^{-\frac{x^2}{2}}}{\sqrt{2\pi}}$ and the summation is extended over all non-negative integer solutions of the equation $\sum_{j=1}^\nu jk_j=\nu$, and we have $l=\sum_{j=1}^\nu k_j$.
\begin{align*}
Q_\nu^{X_t}(x)&=f(x)\sum H_{\nu+2l-1}(x)\prod_{m=1}^\nu \left ( \frac{t\gamma^{X_1}_{m+2}}{(m+2)!(\sqrt{t}V_{X_1})^{m+2}}\right )^{k_m}\\ &=f(x)\sum H_{\nu+2l-1}(x)\left (\prod_{m=1}^\nu t^{-\frac{1}{2}m k_m} \right )\cdot \left ( \prod_{m=1}^\nu \frac{1}{k_m!}\left ( \frac{\gamma_{m+2}^{X_1}}{(m+2)!V_{X_1}^{m+2}}\right)^{k_m}\right)\\ &=f(x)\sum t^{-\frac{1}{2}\sum_{m=1}^\nu mk_m}H_{\nu+2l-1}(x)\prod_{m=1}^\nu \left ( \frac{\gamma_{m+2}^{X_1}}{(m+2)!V_{X_1}^{m+2}}\right )^{k_m}\\=& t^{-\frac{\nu}{2}}Q_\nu^{X_1}(x).
\end{align*}
In the last step, we used the fact that $\nu=k_1+2k_2+\dots+\nu k_\nu$.
\end{proof}
We get the following result by combining the classical results to the previous lemmata and using some continuity arguments.
\begin{corollary}
\label{corolla}
Let $k\geq 3$ be integer s.t. $\E |X_1|^k<\infty$ and let $X_1$ satisfy Cram\'{e}r's condition and $\E X_1=0$. Then 
\begin{align*}
\Pro(X_t<xV_{X_t})&=\Phi(x)+\sum_{\nu=1}^{k-2}Q_\nu^{X_1}(x)t^{-\frac{\nu}{2}}+o\left ( t^{-\frac{k-2}{2}}\right)\\&=\Phi(x)+\sum_{\nu=1}^{k-2}Q_\nu^{X_t}(x)+o\left (t^{-\frac{k-2}{2}}\right), \quad \text{uniformly in }x \in \R.
\end{align*}
\end{corollary}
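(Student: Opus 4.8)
The plan is to transfer the i.i.d.\ Edgeworth expansion of Theorem~\ref{iid-case} to the L\'evy process in two stages: first along arithmetic progressions of times, where it is an essentially immediate consequence of Theorem~\ref{iid-case} and the two scaling lemmas, and then to all real $t$ by an interpolation argument. Throughout put $A_t(x)=\Phi(x)+\sum_{\nu=1}^{k-2}Q_\nu^{X_1}(x)t^{-\nu/2}$, so that the assertion is $\sup_{x\in\R}\bigl|\Pro(X_t<xV_{X_t})-A_t(x)\bigr|=o(t^{-(k-2)/2})$; the second form in the statement is then just Lemma~\ref{scaling}.

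\emph{Step 1 (lattice times).} Fix a rational $h>0$. The increments $Z_j=X_{jh}-X_{(j-1)h}$ are i.i.d.\ with common law $\mathcal L(X_h)$, which has mean $h\,\E X_1=0$, satisfies Cram\'er's condition by the last sentence of Remark~\ref{cramerremark}, and has a finite $k$-th absolute moment (since $X_1$ is a sum of $n$ i.i.d.\ copies of $X_{1/n}$, finiteness of $\E|X_1|^k$ forces $\E|X_{1/n}|^k<\infty$, hence $\E|X_q|^k<\infty$ for every rational $q>0$). Apply Theorem~\ref{iid-case} to $(Z_j)$ with $m$ summands; using $V_{X_s}=\sqrt s\,V_{X_1}$ for all $s$ (the cumulant lemma with $k=2$) one has $\sqrt m\,V_{X_h}=V_{X_{mh}}$, using Lemma~\ref{scaling} one has $Q_\nu^{X_h}(x)m^{-\nu/2}=(mh)^{-\nu/2}Q_\nu^{X_1}(x)$, and $m^{-(k-2)/2}=h^{(k-2)/2}(mh)^{-(k-2)/2}$. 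Hence, with $t=mh$,
\begin{equation*}
\Pro(X_t<xV_{X_t})=A_t(x)+o\bigl(t^{-(k-2)/2}\bigr)\qquad\text{uniformly in }x,
\end{equation*}
as $t\to\infty$ through $h\N$, for every rational $h>0$.

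\emph{Step 2 (interpolation).} Fix $h$; for real $t$ put $t'=h\lfloor t/h\rfloor\in h\N$, $r=t-t'\in[0,h)$, and split $X_t=X_{t'}+W$ with $W:=X_t-X_{t'}$ independent of $X_{t'}$ and $W\stackrel{d}{=}X_r$, so $\E W=0$ and $\E|W|^j\le C_j r$ for $2\le j\le k$, $r\le1$ (a standard small-time moment bound, or expand moments through $\gamma_m^{X_r}=r\gamma_m^{X_1}$). Conditioning on $W$ and inserting the Step~1 expansion of $\mathcal L(X_{t'})$ gives
\begin{equation*}
\Pro(X_t<xV_{X_t})=\E\Bigl[\Phi\bigl(\theta x-W/V_{X_{t'}}\bigr)+\sum_{\nu=1}^{k-2}Q_\nu^{X_1}\bigl(\theta x-W/V_{X_{t'}}\bigr)(t')^{-\nu/2}\Bigr]+o\bigl((t')^{-(k-2)/2}\bigr)
\end{equation*}
uniformly in $x$, where $\theta:=V_{X_t}/V_{X_{t'}}=\sqrt{t/t'}=1+O(h/t)$. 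The first expectation equals $\Pro\bigl((N(0,1)+W/V_{X_{t'}})/\theta<x\bigr)$ with $N(0,1)\perp W$; the key point is that $N(0,1)+W/V_{X_{t'}}$ has variance \emph{exactly} $1+r/t'=\theta^2$, so the a priori $O(h/t)$-sized corrections cancel and what remains is controlled by the third and higher cumulants of $W/V_{X_{t'}}$, which are $O(r\,t^{-\nu/2})$ in order $\nu\ge3$. Taylor-expanding $\Phi$ and the bounded, smooth, Gaussian-damped functions $Q_\nu^{X_1}$ in the small quantities $W/V_{X_{t'}}$ and $\theta-1$, the right-hand side collapses to $A_{t'}(x)$ plus corrections of the form $(\text{a constant depending only on }k)\cdot r\cdot(\text{a negative power of }t)$; these, together with $A_{t'}(x)-A_t(x)$, which is of exactly the same shape, match the convolution stability of the Edgeworth polynomials, so that after cancelling equal orders the residual is bounded uniformly in $x$ by $C\,h\,t^{-(k-2)/2}+o\bigl((t')^{-(k-2)/2}\bigr)$ with $C=C(k)$. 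Finally, upgrade $O$ to $o$ by a diagonal argument: given $\varepsilon>0$, first fix $h=h(\varepsilon)$ with $Ch<\varepsilon/2$, and then, with this $h$ fixed, invoke Step~1 to make the remaining $o\bigl((t')^{-(k-2)/2}\bigr)=o\bigl(t^{-(k-2)/2}\bigr)$ term $\le\frac\varepsilon2\,t^{-(k-2)/2}$ for all large $t$; this yields $\sup_x\bigl|\Pro(X_t<xV_{X_t})-A_t(x)\bigr|\le\varepsilon\,t^{-(k-2)/2}$ for all large $t$, and since $\varepsilon$ is arbitrary the claim follows.

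\emph{Main obstacle.} Step~2 is where the real effort lies. Two points need care: (i) showing that the corrections produced by the small independent increment $W$ reassemble $A_t-A_{t'}$ up to a genuine $o(t^{-(k-2)/2})$ — this is tantamount to re-deriving the stability of the Edgeworth expansion under convolution with a small-cumulant factor, and it is precisely here that the improvement over Theorem~\ref{cramerlevycase} (its $O$ becoming $o$, and $k-3$ becoming $k-2$) is earned; and (ii) the mesh $h$ cannot be let tend to $0$ inside Theorem~\ref{iid-case}, as that would be a triangular-array statement not available here, which is why the diagonal argument is needed and why one must keep track that each $h$-dependent error term carries an explicit factor $h$ in front of an admissible power of $t$. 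One can also bypass Step~2 by rerunning the Fourier-inversion proof of Theorem~\ref{iid-case} with $(e^{\psi(s)})^t$ playing the role of $v_{Y_1}(s)^n$; then real $t$ is no harder than integer $n$ and the ``continuity'' reduces to continuity of $t\mapsto e^{t\psi(s)}$.
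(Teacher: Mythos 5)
The paper gives no actual proof of Corollary~\ref{corolla}---only the sentence preceding it (``combining the classical results to the previous lemmata and using some continuity arguments'')---so the comparison can only be against what a complete argument would require. Your Step~1 is correct and is exactly the ``combination'' part: for fixed $h>0$ the increments over the lattice $h\N$ are i.i.d.\ copies of $X_h$, which inherit Cram\'er's condition by Remark~\ref{cramerremark} and the $k$-th moment; Theorem~\ref{iid-case} plus $V_{X_{mh}}=\sqrt m\,V_{X_h}$ and Lemma~\ref{scaling} then give the claimed expansion along $t\in h\N$ with error $o(t^{-(k-2)/2})$.

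The gap is in Step~2, and it is not a small one. The whole difficulty of passing from lattice to real times is concentrated in the one sentence asserting that, after Taylor expansion, the corrections generated by the small increment $W$ ``match the convolution stability of the Edgeworth polynomials'' and leave a residual $C\,h\,t^{-(k-2)/2}+o(t^{-(k-2)/2})$. That assertion is the theorem: you must show that the order-$(k-2)$ Edgeworth signed measure is reproduced, uniformly in $x$ and to within $o(t^{-(k-2)/2})$, after convolution with an independent variable whose $\nu$-th cumulants are $O(h\,t^{-\nu/2})$ and of which only $k$ absolute moments are controlled. This is not a routine Taylor estimate: the individual third-cumulant contribution of $W$ and the individual term $Q_1^{X_1}(x)\bigl(t^{-1/2}-(t')^{-1/2}\bigr)$ are each of size $h\,t^{-3/2}$, which for $k\geq 5$ is not $o(t^{-(k-2)/2})$, so exact order-by-order cancellation has to be exhibited together with an $h$-uniform bound on the remainder before your diagonal argument can run. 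As you yourself concede, carrying this out is tantamount to re-deriving the Edgeworth machinery, so the sketch does not yet constitute a proof. The remedy you mention only in your final sentence is the right one and should be the proof: rerun the Fourier-inversion argument behind Theorem~\ref{iid-case} with $v_{X_1}(s)^{t}=e^{t\psi(s)}$ in place of $v_{Y_1}(s)^{n}$. Every estimate there uses only that the characteristic function is a large power of a fixed one satisfying Cram\'er's condition, nothing breaks for non-integer exponents, and real $t$ becomes no harder than integer $n$; this is presumably the ``continuity argument'' the author had in mind, and it avoids the interpolation entirely.
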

From now on in this paper, we assume (if not otherwise stated) that $\E X_1=0$, $X_1$ satisfies Cram\'{e}r's condition and has moments of all orders i.e.
\begin{equation*}
\E |X_1|^\nu<\infty, \quad \text{for }\nu \in \N.
\end{equation*}
Now we have everything ready for introducing the main results of the article to get exact series representations. The proofs are in Section~\ref{proofs}. In the following Theorems~\ref{main},~\ref{main2} and~\ref{main3}, $\mu$ is assumed to be the L\'{e}vy measure of process $X$.
\begin{theor}
\label{main}
Let the L\'{e}vy measure of $X$ have bounded support, then we get for $x_1<x_2$ points of continuity of $\Pro(X_t<\cdot V_{X_t})$ that
\begin{align*}
\label{representation}
&\Pro\left (x_1 <\frac{X_t}{V_{X_t}}< x_2\right)=\Pro(X_t<x_2V_{X_t})-\Pro(X_t<x_1V_{X_t})\\=&\Phi(x_2)-\Phi(x_1)+\sum_{\nu=1}^\infty \left (Q_\nu^{X_t}(x_2)-Q_\nu^{X_t}(x_1) \right)\\=&\Phi(x_2)-\Phi(x_1)+\sum_{\nu=1}^\infty\left (  Q_\nu^{X_1}(x_2)-Q_\nu^{X_1}(x_1)\right )t^{-\frac{\nu}{2}}.
\end{align*}
\end{theor}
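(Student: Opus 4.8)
The plan is to show that for a Lévy measure with bounded support, the asymptotic expansion in Corollary~\ref{corolla} is in fact an exact convergent series, by letting $k\to\infty$ and controlling the remainder uniformly. The key observation is that Corollary~\ref{corolla} already gives, for \emph{every} integer $k\geq 3$,
\begin{equation*}
\Pro(X_t<x_2 V_{X_t})-\Pro(X_t<x_1 V_{X_t})=\Phi(x_2)-\Phi(x_1)+\sum_{\nu=1}^{k-2}\bigl(Q_\nu^{X_1}(x_2)-Q_\nu^{X_1}(x_1)\bigr)t^{-\frac{\nu}{2}}+o\bigl(t^{-\frac{k-2}{2}}\bigr),
\end{equation*}
since all moments of $X_1$ are assumed finite. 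So the partial sums of the candidate series approximate the left-hand side with error $o(t^{-(k-2)/2})$ for arbitrarily large $k$. To conclude that the \emph{full} series converges to the left-hand side, it suffices to prove that the tail $\sum_{\nu\geq N}\bigl(Q_\nu^{X_1}(x_2)-Q_\nu^{X_1}(x_1)\bigr)t^{-\nu/2}\to 0$ as $N\to\infty$ (for the fixed $t$ in question); combining this with the displayed identity for a single large $k$ then pins the sum of the series to be exactly $\Pro(X_t<x_2V_{X_t})-\Pro(X_t<x_1V_{X_t})-\Phi(x_2)+\Phi(x_1)$.

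Hence the heart of the proof is a quantitative bound on $|Q_\nu^{X_1}(x)|$, or more precisely on the increments $|Q_\nu^{X_1}(x_2)-Q_\nu^{X_1}(x_1)|$, that decays fast enough to beat $t^{\nu/2}$. Here is where the bounded-support hypothesis on $\mu$ enters: if $\mu$ is supported in $[-R,R]$, then $X_1$ has exponential moments, and one gets explicit Cramér-type bounds on the cumulants, namely $|\gamma_{m+2}^{X_1}|\leq C\,(m+2)!\,R^{m}\,\lambda$ for suitable constants (with $\lambda=\mu(\R)+\sigma^2/R^2$ or similar), which exactly cancels the $(m+2)!$ in the denominator of the coefficient $\gamma_{m+2}^{X_1}/((m+2)!V_{X_1}^{m+2})$ appearing in~\eqref{Q}. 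Feeding these into the definition of $Q_\nu^{X_1}$, and using the recursion $H_{n+1}(x)=xH_n(x)-nH_{n-1}(x)$ together with the factor $e^{-x^2/2}$ to bound $|e^{-x^2/2}H_n(x)|$ (the Hermite functions are uniformly bounded by roughly $\sqrt{n!}$), plus a count of the number of partitions of $\nu$, should yield a bound of the form $|Q_\nu^{X_1}(x)|\leq A\,B^\nu\,\sqrt{\nu!}/\nu!^{1/2}\cdots$ — in any case a bound whose series $\sum_\nu |Q_\nu^{X_1}(x)|\,t^{-\nu/2}$ converges for all $t$ large enough, say $t>t_0(R,\mu)$. (For the increment version one can simply use $|Q_\nu^{X_1}(x_2)-Q_\nu^{X_1}(x_1)|\leq |Q_\nu^{X_1}(x_2)|+|Q_\nu^{X_1}(x_1)|$, or a mean-value bound via $H_n'=nH_{n-1}$.)

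The main obstacle is precisely this combinatorial-analytic tail estimate: one must track three competing factorial-type quantities — the growth of the cumulants $\gamma_{m+2}^{X_1}$, the growth of the Hermite functions $H_{\nu+2l-1}$, and the number of solutions of $k_1+2k_2+\dots+\nu k_\nu=\nu$ — and show their product, divided by $\prod k_m!\,(m+2)!^{k_m}$, still leaves room for geometric decay in $\nu$ once multiplied by $t^{-\nu/2}$. The bounded-support assumption is what makes the cumulant bound clean (analytic characteristic function, Cauchy estimates on $\psi$), and it presumably also forces the restriction to large $t$ implicit in the statement via continuity points. Once the tail bound is in hand, everything else is bookkeeping: the first equality in the statement is the definition of the difference of the distribution function at two continuity points, the second is the convergence argument above, and the third is just Lemma~\ref{scaling} applied termwise, $Q_\nu^{X_t}(x)=t^{-\nu/2}Q_\nu^{X_1}(x)$.
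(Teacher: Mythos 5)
Your proposal has a genuine logical gap at its core: an asymptotic expansion as $t\to\infty$, together with convergence of the formal series, does \emph{not} imply that the series sums to the function. Corollary~\ref{corolla} only tells you that the remainder $\epsilon_k(t)=\Pro(X_t<xV_{X_t})-\Phi(x)-\sum_{\nu=1}^{k-2}Q_\nu^{X_1}(x)t^{-\nu/2}$ is $o(t^{-(k-2)/2})$ \emph{as $t\to\infty$ with $k$ fixed}; for the fixed $t$ at which you want the exact representation this gives no control whatsoever on $\epsilon_k(t)$, and proving that the tail of the series tends to $0$ only shows that $\epsilon_k(t)$ converges to \emph{some} limit as $k\to\infty$ --- nothing forces that limit to be $0$. (Compare $e^{-t}$, whose asymptotic expansion in powers of $t^{-1/2}$ is identically zero: that series converges with vanishing tail, yet does not represent the function.) Bridging exactly this gap is the content of the theorem, and the paper does it on the Fourier side: bounded support gives $\lambda_\nu^{X_1}\le C^\nu$ (Lemma~\ref{boundedsupport}), hence $\log f_{X_t}(s)$ is a genuinely convergent power series, the representation $f_{X_t}(s)=e^{-s^2/2}\bigl(1+\sum_\nu P_\nu(is)t^{-\nu/2}\bigr)$ is obtained by rearranging the absolutely convergent exponential series, and the inversion formula is then applied termwise. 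Only \emph{after} an exact convergent expansion in powers of $t^{-1/2}$ has been established is Theorem~\ref{iid-case} invoked, merely to identify the coefficients $R_\nu$ with $Q_\nu^{X_1}$ by uniqueness of asymptotic expansions. Your argument runs in the opposite, invalid direction.

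A second problem is that the tail estimate you rely on is itself doubtful. By Cram\'er's inequality $|e^{-x^2/2}H_n(x)|$ is of order $\sqrt{n!}$ (already at $x=0$ one has $|H_n(0)|=(n-1)!!\sim\sqrt{n!}$ for even $n$), and in $Q_\nu^{X_1}$ the Hermite index runs up to $n=3\nu-1$ while the compensating denominators $\prod_m k_m!\,((m+2)!)^{k_m}$ for the dominant partitions contribute only about $\nu!$; the net size of a single term is then of order $\nu^{\nu/2}e^{O(\nu)}$, which is not beaten by $t^{-\nu/2}$ for any fixed $t$. So the crude bound $|Q_\nu^{X_1}(x_2)-Q_\nu^{X_1}(x_1)|\le|Q_\nu^{X_1}(x_2)|+|Q_\nu^{X_1}(x_1)|$ will not produce a convergent majorant, and (relatedly) your restriction to $t>t_0$ is not in the statement, which asserts the representation for every $t>0$. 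This is why the paper never estimates the $Q_\nu$ directly but instead works with the convergent series for the characteristic function and the inversion integral.
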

There is some discussion about the L\'{e}vy measures with bounded support for example in~\citet{sato}. In fact, this is a reasonable class to be considered in the simulations because of the practical limitations.

Nevertheless, the result of Theorem~\ref{main} is true with more general conditions:
\begin{theor}
\label{main2}
Let $\mu$ be s.t. for some $a\geq 0$, $\mu(x)1_{\{|x|>a\}}$ is absolutely continuous with respect to Lebesgue measure and for some $C,\epsilon>0$
\begin{equation*}
\frac{d\mu(x)}{dx}\leq C \exp \{-|x|^{1+\epsilon}\}, \quad \text{for }|x|\geq a.
\end{equation*}
Then the assertion of Theorem~\ref{main} holds.
\end{theor}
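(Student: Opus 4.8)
The plan is to prove Theorem~\ref{main2} by showing that the argument used for Theorem~\ref{main} carries over verbatim under the weaker hypothesis, the only new ingredient being a suitable bound on the cumulants of $X_1$. (One could instead apply Theorem~\ref{main} to the truncated processes $X^{(n)}$ with L\'evy measure $\mu\,1_{\{|u|\le n\}}$ — these have bounded support and still satisfy Cram\'er's condition for $n$ large, since $|v_{X^{(n)}_1}(s)|\le |v_{X_1}(s)|\,e^{2\mu(\{|u|>n\})}$ — and then let $n\to\infty$; but passing the limit through the infinite Edgeworth series needs exactly the same estimates, so nothing is gained.)

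First I would record the cumulant estimate. Since $\gamma_k^{X_1}=\int u^k\,\mu(du)$ for $k\ge 3$, I split
\[
\gamma_k^{X_1}=\int_{\{|u|\le a\}}u^k\,\mu(du)+\int_{\{|u|>a\}}u^k\,\frac{d\mu}{du}\,du .
\]
The first term is bounded in absolute value by $a^{k-2}\int_{\{|u|\le a\}}u^2\,\mu(du)$, which is finite and grows at most geometrically in $k$. For the second, the hypothesis gives, by the substitution $v=|u|^{1+\epsilon}$,
\[
\Big|\int_{\{|u|>a\}}u^k\,\frac{d\mu}{du}\,du\Big|\le C\int_{\mathbb R}|u|^k e^{-|u|^{1+\epsilon}}\,du=\frac{2C}{1+\epsilon}\,\Gamma\!\Big(\frac{k+1}{1+\epsilon}\Big).
\]
Hence $|\gamma_k^{X_1}|\le C'\,\Gamma\!\big(\tfrac{k+1}{1+\epsilon}\big)$ for all $k$, the geometric term being absorbed into the (faster-growing) Gamma term. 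Because $1/(1+\epsilon)<1$, Stirling's formula yields $\Gamma\!\big(\tfrac{k+1}{1+\epsilon}\big)/k!\le D^{\,k}k^{-k\epsilon/(1+\epsilon)}$, so $\sum_{k\ge 3}\tfrac{|\gamma_k^{X_1}|}{k!}r^k<\infty$ for every $r\ge 0$. Consequently $\int(e^{isu}-1-isu\,1_{\{|u|\le 1\}})\,\mu(du)$ extends to an entire function of $s$, so $\psi$ and $v_{X_t}$ are entire and $X_1$ has exponential moments of every order — exactly the analytic facts that bounded support supplied in Theorem~\ref{main}.

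With this in hand I expect the proof of Theorem~\ref{main} to transfer with no structural change, since its estimates use the data $(\sigma^2,\mu)$ only through the entireness of $\psi$ and through a bound on $\sum_{k\ge 3}\tfrac{|\gamma_k^{X_1}|}{k!\,V_{X_1}^k}r^k$: the splitting of the inversion integral at $|s|$ of order $\sqrt t$, the Taylor‑remainder control (in the number of terms) of $\exp\{\sum_{k\ge 3}\tfrac{\gamma_k^{X_1}}{k!\,V_{X_1}^k}t^{-(k-2)/2}(is)^k\}$ on the ball where the exponent stays below $\tfrac12 s^2$ in modulus, the use of Cram\'er's condition for $|s|$ large, and the recognition of each $Q_\nu^{X_1}$ as the inverse Fourier transform of $e^{-s^2/2}$ times the weight‑$\nu$ polynomial all remain valid. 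Lemma~\ref{scaling} and Corollary~\ref{corolla} are unaffected, so both displayed forms in the conclusion follow.

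The main obstacle is precisely the claim that bounded support was used in the proof of Theorem~\ref{main} only to secure those two facts. If that proof quantified the remainder using the geometric bound $|\gamma_k^{X_1}|\le (a/V_{X_1})^{k-2}V_{X_1}^2$ available there, one has to re‑check each such step with the merely sub‑factorial bound $|\gamma_k^{X_1}|\le C'\Gamma\!\big(\tfrac{k+1}{1+\epsilon}\big)$. This should go through: since $1/(1+\epsilon)<1$ every power series occurring keeps infinite radius of convergence, and on the ball $|s|\le c\sqrt t$ (with $c$ small) one evaluates the auxiliary series $\sum_{m\ge 1}\tfrac{|\gamma_{m+2}^{X_1}|}{(m+2)!\,V_{X_1}^{m+2}}v^m$ only near $v=0$, where it vanishes, so $e^{-s^2/2}$ still dominates and the Taylor remainder decays geometrically there; outside the ball one again invokes Cram\'er's condition exactly as before. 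Thus the only role of the exponent $1+\epsilon$ is to force the cumulants to grow slower than $k!$; any rate keeping $\sum_k|\gamma_k^{X_1}|r^k/k!$ finite for all $r$ would work, and $e^{-|u|^{1+\epsilon}}$ is a convenient explicit choice.
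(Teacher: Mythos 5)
Your proposal is correct and follows essentially the same route as the paper: the paper likewise reduces Theorem~\ref{main2} to justifying the rearrangement of the series for $f_{X_t}(s)$ via a sub-factorial bound on the cumulants $\gamma_k^{X_1}=\int x^k\,\mu(dx)$, obtained from $\int_0^\infty x^k e^{-x^{1+\epsilon}}dx$ (the paper computes this by iterated integration by parts and a combinatorial product estimate, after splitting $X$ into a bounded-support part and a compensated compound Poisson tail part, whereas you evaluate it directly as $\tfrac{1}{1+\epsilon}\Gamma\bigl(\tfrac{k+1}{1+\epsilon}\bigr)$ and apply Stirling). Your closing worry is unfounded: in the paper's proof of Theorem~\ref{main} the geometric cumulant bound is used only to secure absolute convergence of the exponent series (the identification $R_\nu=Q_\nu^{X_1}$ being delegated to the classical finite-order expansion), so the weaker bound indeed suffices exactly as you argue.
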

And even more generally we get the following:
\begin{theor}
\label{main3}
Assume that there are $a\geq 0$ and $C,\epsilon>0$ s.t.
\begin{equation*}
\mu((-x-1,-x],[x,x+1))\leq C \exp\{-x^{1+\epsilon}\}, \quad \text{for } x\geq a.
\end{equation*}
Then the representation of Theorem~\ref{main} holds.
\end{theor}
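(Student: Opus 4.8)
The plan is to obtain Theorem~\ref{main3} by re‑running the proof of Theorem~\ref{main2}. Note first that the hypothesis of Theorem~\ref{main2} is the more restrictive one: a density bound $\frac{d\mu}{dx}\le Ce^{-|x|^{1+\epsilon}}$ on $\{|x|>a\}$ forces $\mu([x,x+1)\cup(-x-1,-x])\le 2Ce^{-x^{1+\epsilon}}$ for $x\ge a$. The key point is that the L\'evy measure enters the functions $Q^{X_1}_\nu$, and hence everything from Theorem~\ref{iid-case} through Corollary~\ref{corolla} up to Theorem~\ref{main2}, only through the cumulants $\gamma^{X_1}_k=\sigma^2 1_{\{k=2\}}+\int u^k\,d\mu(u)$ and through $V_{X_1}$. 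Since Cram\'er's condition and $\E X_1=0$ are standing assumptions in Theorem~\ref{main3}, the whole task reduces to deriving, from the tail–mass hypothesis alone, the same growth bound on the $\gamma^{X_1}_k$ that the density hypothesis of Theorem~\ref{main2} yields; everything downstream is then unchanged. (One could also truncate $\mu$ to $[-R,R]$, apply the bounded‑support Theorem~\ref{main}, and pass to the limit $R\to\infty$ using continuity of the $\gamma^{X_1}_k$ and of $V_{X_1}$; but then one must separately check that Cram\'er's condition survives the truncation, so the reduction above is cleaner.)

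The cumulant estimate is the computational heart. Fix $k\ge 3$, set $b=\lceil a\rceil$, and split $\gamma^{X_1}_k=\int u^k\,d\mu(u)$ into the integrals over $\{0<|u|<b\}$ and $\{|u|\ge b\}$. The first is at most $b^{k-2}\int_{|u|<b}u^2\,d\mu(u)\le b^{k-2}V_{X_1}^2$, merely geometric in $k$. For the second, cover $\{|u|\ge b\}$ by the unit shells $S_n=[n,n+1)\cup(-n-1,-n]$, $n\ge b$, so that $\int_{|u|\ge b}|u|^k\,d\mu(u)\le\sum_{n\ge b}(n+1)^k\mu(S_n)\le C\sum_{n\ge b}(n+1)^k e^{-n^{1+\epsilon}}$. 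The summand is unimodal in $n$, hence the sum is controlled by $\int_0^\infty(x+1)^k e^{-x^{1+\epsilon}}\,dx$ plus its maximal term; the substitution $v=x^{1+\epsilon}$ turns that integral into a constant multiple of $\Gamma\!\bigl(\tfrac{k+1}{1+\epsilon}\bigr)$, and Stirling gives $\Gamma\!\bigl(\tfrac{k+1}{1+\epsilon}\bigr)\le C_1 D_1^{\,k}(k!)^{1/(1+\epsilon)}$. Thus $|\gamma^{X_1}_k|\le C_2 D_2^{\,k}(k!)^{1/(1+\epsilon)}$ for every $k$; in particular all moments of $X_1$ are finite and $V_{X_1}>0$, so the standing assumptions are consistent, and this is precisely the bound a density bound of Theorem~\ref{main2}'s type would have produced.

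With this in hand the argument of Theorem~\ref{main2} applies essentially verbatim: writing $Y_t=X_t/V_{X_t}$ one has $v_{Y_t}(s)=\exp\bigl(-\tfrac{s^2}{2}+A_t(s)\bigr)$ with $A_t(s)=\sum_{k\ge3}\tfrac{\gamma^{X_1}_k}{k!\,V_{X_1}^{k}}t^{-(k-2)/2}(is)^k$, a series the sub‑factorial growth $(k!)^{1/(1+\epsilon)}$ makes entire in $s$; one recovers $\Pro(X_t<xV_{X_t})$ by L\'evy inversion, cuts the inversion integral at $|s|\sim\delta\sqrt t$, disposes of the high‑frequency part by Cram\'er's condition, expands $e^{A_t(s)}$ on the low‑frequency part into the Edgeworth polynomials, and integrates termwise against the Gaussian kernel to produce $\sum_\nu Q^{X_1}_\nu(\cdot)\,t^{-\nu/2}$ — and since $\mu$ has been absorbed into the $\gamma^{X_1}_k$, nothing here changes. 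The step I expect to be the real obstacle — the one where the exponent $1/(1+\epsilon)<1$ does genuine work — is the uniform control of the tail of the expansion: one must show $e^{A_t(s)}-\sum_{\nu\le N}P_\nu(is)\,t^{-\nu/2}$ is $O(\rho^N)$ with $\rho<1$, against an $s$‑weight still integrable against $e^{-s^2/2}$ (here $(k!)^{1/(1+\epsilon)}$ with exponent below $1$ is exactly what keeps that weight under the Gaussian, the competing growth staying below order $2$), uniformly enough in $N$ that the limit $N\to\infty$ may be taken at every continuity point $x_1<x_2$. Carrying out that remainder estimate, rather than the cumulant bookkeeping, is where the work sits.
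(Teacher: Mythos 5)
Your treatment of the genuinely new content of Theorem~\ref{main3} --- the cumulant bound --- is the same as the paper's: split $\int |u|^\nu\,d\mu$ at $|u|=a$, cover the tail by unit shells, and compare the shell sum with $\int_a^\infty (\mathrm{const}\cdot x)^\nu e^{-x^{1+\epsilon}}\,dx$. The only difference there is how that integral is evaluated: the paper feeds it back into the iterated integration by parts already set up in the proof of Theorem~\ref{main2} (choosing $m$ with $\tfrac1m\le\epsilon$ and bounding the resulting product by $\tfrac{n!}{j!}$ for $n=(m+1)j+k$), whereas you substitute $v=x^{1+\epsilon}$ and use Stirling to get $|\gamma^{X_1}_k|\le C_2D_2^{\,k}(k!)^{1/(1+\epsilon)}$. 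The two bounds are interchangeable for the purpose at hand (sub\-/factorial growth, making the cumulant series entire in $s$), and yours is arguably tidier; skipping the paper's decomposition of $X$ into a compound Poisson tail part plus a bounded-support part, by using additivity of cumulants directly, is also fine. Where you diverge is the endgame. The paper does not truncate the inversion integral at $|s|\sim\delta\sqrt t$ and dispose of high frequencies via Cram\'er's condition; instead, for fixed $t>0$ it rearranges the entire function into $1+\sum_\nu P_\nu(is)t^{-\nu/2}$ by absolute convergence, inverts term by term over all of $\R$ (justified by uniform convergence on compacts plus the vanishing of $\int_{|s|>T}$ because $f_{X_t}$ is a characteristic function), and only at the very end identifies the resulting coefficient functions $R_\nu$ with $Q_\nu^{X_1}$ by comparison with the classical asymptotic expansion (Corollary~\ref{corolla}) --- Cram\'er's condition is used only through that identification. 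So the uniform-in-$N$ remainder estimate you flag as ``where the work sits'' is not a step the paper performs; it is an obligation created by your alternative sketch of the final step, and if you follow that route you do still owe it.
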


\begin{remark}
In the cases of Theorems~\ref{main},~\ref{main2}~and~\ref{main3}, we get some series representation also for other finite dimensional distributions since the series representation can be written for all increments separately.
\end{remark}
Moreover, we get a representation for the distribution function of the absolute value of a L\'{e}vy process as follows:
\begin{corollary}
Assume that the assumptions of~\ref{main},~\ref{main2} or~\ref{main3} hold. Then we get for $x> 0$ and $-x$ points of continuity of $\Pro(X_t<\cdot V_{X_t})$ that
\begin{equation}
\Pro(|X_t|< xV_{X_t})=2 \Phi(x)-1+2\sum_{\nu=1}^\infty Q_{2\nu}^{X_t}(x)=2\Phi(x)-1+2\sum_{\nu=1}^\infty Q_{2\nu}^{X_1}(x)t^{-\nu}
\label{pipe1}
\end{equation}
and
\begin{equation}
\Pro(|X_t|>xV_{X_t})=2-2\Phi(x)-2\sum_{\nu=1}^\infty Q_{2\nu}^{X_t}(x)=2-2\Phi(x)-2\sum_{\nu=1}^\infty Q_{2\nu}^{X_1}(x)t^{-\nu}.
\label{pipe2}
\end{equation}
\end{corollary}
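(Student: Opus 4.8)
The plan is to reduce the two-sided event $\{|X_t| < xV_{X_t}\}$ to the two-point event already handled by Theorem~\ref{main} (resp.~\ref{main2},~\ref{main3}) and then exploit the parity of the functions $Q_\nu^{X_t}$ in~\eqref{Q}. Since $x > 0$, we have $\{|X_t| < xV_{X_t}\} = \{-xV_{X_t} < X_t < xV_{X_t}\}$, and both $x$ and $-x$ are assumed to be points of continuity of $\Pro(X_t < \cdot\, V_{X_t})$, so Theorem~\ref{main} applies with $x_1 = -x$, $x_2 = x$ and gives
\[
\Pro(|X_t| < xV_{X_t}) = \Phi(x) - \Phi(-x) + \sum_{\nu=1}^\infty\bigl(Q_\nu^{X_t}(x) - Q_\nu^{X_t}(-x)\bigr).
\]

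Next I would evaluate the two ingredients on the right. For the Gaussian part, $\Phi(-x) = 1 - \Phi(x)$ yields $\Phi(x) - \Phi(-x) = 2\Phi(x) - 1$. For the correction terms, I would record the reflection identity $Q_\nu^{X}(-x) = (-1)^{\nu-1} Q_\nu^{X}(x)$: in~\eqref{Q} the prefactor $-\tfrac{1}{\sqrt{2\pi}}e^{-x^2/2}$ and the cumulant product are even in $x$, while every Hermite polynomial appearing carries index $\nu + 2l - 1$, whose parity is that of $\nu-1$ (the shift $2l = 2\sum_j k_j$ being even), so by $H_n(-x) = (-1)^n H_n(x)$ each summand acquires the common sign $(-1)^{\nu-1}$. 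Hence $Q_\nu^{X_t}(x) - Q_\nu^{X_t}(-x)$ vanishes for odd $\nu$ and equals $2Q_\nu^{X_t}(x)$ for even $\nu$; setting $\nu = 2m$ collapses the series to $2\sum_{m=1}^\infty Q_{2m}^{X_t}(x)$, which is the claimed form~\eqref{pipe1}. The final equality in~\eqref{pipe1} then follows at once from Lemma~\ref{scaling}, since $Q_{2m}^{X_t}(x) = t^{-m}Q_{2m}^{X_1}(x)$.

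For~\eqref{pipe2} I would pass to complements: because $x$ and $-x$ are continuity points, $\Pro(|X_t| = xV_{X_t}) = 0$, so $\Pro(|X_t| > xV_{X_t}) = 1 - \Pro(|X_t| < xV_{X_t})$, and substituting~\eqref{pipe1} gives $2 - 2\Phi(x) - 2\sum_{\nu=1}^\infty Q_{2\nu}^{X_t}(x)$, with the $t$-scaled version again supplied by Lemma~\ref{scaling}.

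I do not expect a genuine obstacle here; the only point requiring a little care is the parity bookkeeping for the Hermite indices in~\eqref{Q} — specifically checking that $l = \sum_j k_j$ only ever shifts the index by an even amount, so that the sign of $Q_\nu^X(-x)/Q_\nu^X(x)$ depends on $\nu$ alone — together with the minor but worth-noting observation that no rearrangement of the series is involved, since the odd-index contributions are identically zero rather than merely summing to zero.
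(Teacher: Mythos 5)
Your proposal is correct and follows essentially the same route as the paper: reduce $\Pro(|X_t|<xV_{X_t})$ to the difference handled by Theorem~\ref{main} with $x_1=-x$, $x_2=x$, use $H_n(-x)=(-1)^nH_n(x)$ together with the evenness of the prefactor to kill the odd-$\nu$ terms and double the even ones, and obtain~(\ref{pipe2}) by complementation. Your explicit parity bookkeeping for the index $\nu+2l-1$ and the remark that the continuity of $\pm x$ justifies $\Pro(|X_t|=xV_{X_t})=0$ are slightly more detailed than the paper's proof but not a different argument.
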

\begin{proof}
\begin{align*}
&\Pro(|X_t|<xV_{X_t})=\Pro(X_t<xV_{X_t})-\Pro(X_t<-xV_{X_t})\\=&\Phi(x)-\Phi(-x)+\sum_{\nu=1}^\infty \left (Q_\nu^{X_t}(x)-Q_\nu^{X_t}(-x)\right)\\=&2\Phi(x)-1+\sum_{\nu=1}^\infty \left ( Q_\nu^{X_t}(x)-Q_\nu^{X_t}(-x)\right ).
\end{align*}
We use the symmetry condition for Hermite polynomials and get
\begin{align*}
&Q_\nu^{X_t}(x)-Q_\nu^{X_t}(-x)\\=&-\frac{e^{-\frac{x^2}{2}}}{\sqrt{2\pi}}\sum \left ( H_{\nu+2l-1}(x)-H_{\nu+2l-1}(-x)\right )\prod_{m=1}^\nu \left ( \frac{\gamma_{m+2}^{X_t}}{(m+2)!V_{X_t}^{m+2}}\right )^{k_m}\\=&2Q_\nu^{X_t}(x)1_{\{\nu=2p | p \in \N\}}.
\end{align*}
Equation~(\ref{pipe2}) is a direct consequence of~(\ref{pipe1}).
\end{proof}
If $X_t$ has density function for all $t>0$, we get the following:
\begin{corollary}
\label{densitycase}
Assume besides the assumptions of~\ref{main},~\ref{main2} or~\ref{main3} that $\frac{X_t}{V_{X_t}}$ has density function $g_{X_t}(s)$ for all $t>0$. Then
\begin{equation*}
g_{X_t}(x)=\frac{1}{\sqrt{2\pi}}e^{-\frac{x^2}{2}}+\sum_{\nu=1}^\infty \frac{d}{dx}Q_\nu^{X_t}(x).
\end{equation*}
\end{corollary}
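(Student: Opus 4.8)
The plan is to obtain the density formula by differentiating, term by term, the exact series representation of the distribution function provided by Theorem~\ref{main} (or Theorem~\ref{main2}, or Theorem~\ref{main3}, depending on which hypothesis is assumed). First I would note that, since $\frac{X_t}{V_{X_t}}$ has a density, \emph{every} $x\in\R$ is a continuity point of $\Pro(X_t<\cdot\,V_{X_t})$, so the cited theorem applies to every pair $x_1<x_2$. Writing $F_t(x):=\Pro(X_t<xV_{X_t})$ and letting $x_1\to-\infty$ in that representation --- the quantities $\Phi(x_1)$ and $Q_\nu^{X_t}(x_1)$ all tending to $0$, since the polynomial part of $Q_\nu^{X_t}$ is annihilated by the factor $e^{-x_1^2/2}$, and the interchange of limit and sum being licensed by the uniform-in-$x$, summable bound on $|Q_\nu^{X_t}|$ used to prove Theorem~\ref{main} --- one arrives at the pointwise identity
\begin{equation*}
F_t(x)=\Phi(x)+\sum_{\nu=1}^\infty Q_\nu^{X_t}(x),\qquad x\in\R .
\end{equation*}

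Next I would differentiate each summand explicitly. From $e^{-x^2/2}H_n(x)=(-1)^n\frac{d^n}{dx^n}e^{-x^2/2}$ one gets immediately
\begin{equation*}
\frac{d}{dx}\bigl(e^{-x^2/2}H_n(x)\bigr)=-\,e^{-x^2/2}H_{n+1}(x),
\end{equation*}
so that, by the definition~(\ref{Q}) of $Q_\nu^{X_t}$,
\begin{equation*}
\frac{d}{dx}Q_\nu^{X_t}(x)=\frac{1}{\sqrt{2\pi}}\,e^{-x^2/2}\sum H_{\nu+2l}(x)\prod_{m=1}^\nu\frac{1}{k_m!}\left(\frac{\gamma_{m+2}^{X_t}}{(m+2)!V_{X_t}^{m+2}}\right)^{k_m}.
\end{equation*}
Thus $\frac{d}{dx}Q_\nu^{X_t}$ has exactly the same structure as $Q_\nu^{X_t}$, the only change being that the Hermite polynomial $H_{\nu+2l-1}$ is replaced by $-H_{\nu+2l}$.

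The decisive step --- and the one I expect to be the main obstacle --- is to justify exchanging $\frac{d}{dx}$ with the infinite sum. By the classical term-by-term differentiation theorem it suffices to show that $\sum_{\nu\ge1}\frac{d}{dx}Q_\nu^{X_t}(x)$ converges uniformly on compact subsets of $\R$, the series $\sum_{\nu\ge1}Q_\nu^{X_t}$ being already known to converge pointwise. Since $\frac{d}{dx}Q_\nu^{X_t}$ differs from $Q_\nu^{X_t}$ only through a shift of the Hermite index by one, the estimates used in Section~\ref{proofs} to establish the absolute and locally uniform convergence of $\sum_\nu Q_\nu^{X_t}$ in the proof of Theorem~\ref{main} carry over essentially verbatim to $\sum_\nu\frac{d}{dx}Q_\nu^{X_t}$: raising the Hermite index by one changes the constants but not the summability, which is controlled by the decay of the cumulants $\gamma_{m+2}^{X_1}$ forced by the hypotheses of Theorems~\ref{main}--\ref{main3}, not by the precise degree of the Hermite factor. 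Granting this, the differentiation theorem yields that $F_t$ is continuously differentiable with $F_t'(x)=\Phi'(x)+\sum_{\nu\ge1}\frac{d}{dx}Q_\nu^{X_t}(x)=\frac{1}{\sqrt{2\pi}}e^{-x^2/2}+\sum_{\nu\ge1}\frac{d}{dx}Q_\nu^{X_t}(x)$; since also $F_t(x)=\int_{-\infty}^x g_{X_t}(s)\,ds$, this $F_t'$ is a continuous version of the density $g_{X_t}$, which is the asserted formula.
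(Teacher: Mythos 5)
Your route is genuinely different from the paper's: you work on the real side, first sending $x_1\to-\infty$ in the representation of Theorem~\ref{main} to get $F_t(x)=\Phi(x)+\sum_\nu Q_\nu^{X_t}(x)$ and then differentiating the series term by term, whereas the paper never touches the distribution-function series at all --- it goes back to the characteristic function, writes $f_{X_t}(s)=e^{-s^2/2}+\sum_\nu P_\nu(is)e^{-s^2/2}t^{-\nu/2}$, applies the Fourier inversion formula for densities, integrates term-wise, and identifies $\frac{1}{2\pi}\int_\R e^{-isx}P_\nu(is)e^{-s^2/2}t^{-\nu/2}\,ds$ with $\frac{d}{dx}Q_\nu^{X_t}(x)$. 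Your explicit computation of $\frac{d}{dx}Q_\nu^{X_t}$ is correct and coincides with the lemma the paper states immediately after the corollary.

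However, both of your interchanges rest on an estimate that does not exist in the paper and that you do not supply. You invoke ``the uniform-in-$x$, summable bound on $|Q_\nu^{X_t}|$ used to prove Theorem~\ref{main}'' and claim the estimates for $\sum_\nu\frac{d}{dx}Q_\nu^{X_t}$ ``carry over essentially verbatim.'' But the convergence arguments in Section~\ref{proofs} are entirely on the Fourier side: they control the cumulants $\lambda_\nu^{X_1}$ so that $\sum_\nu\frac{\lambda_\nu^{X_t}}{\nu!}(is)^\nu$ converges, and then the series $\sum_\nu P_\nu(is)e^{-s^2/2}t^{-\nu/2}$ locally uniformly in $s$; at no point is a bound of the form $\sup_x|Q_\nu^{X_t}(x)|\le c_\nu$ with $\sum_\nu c_\nu<\infty$ established. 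Such a bound is exactly what you need both to pass the limit $x_1\to-\infty$ through the sum (Theorem~\ref{main} only asserts convergence of the series of \emph{differences} $Q_\nu^{X_t}(x_2)-Q_\nu^{X_t}(x_1)$) and to apply the term-by-term differentiation theorem. Obtaining it requires controlling the partition sums and Hermite factors in $Q_\nu^{X_t}$ uniformly in $x$ --- a genuine additional argument, and the most natural way to produce it is via $\bigl|\frac{d}{dx}Q_\nu^{X_t}(x)\bigr|\le\frac{1}{2\pi}t^{-\nu/2}\int_\R|P_\nu(is)|e^{-s^2/2}\,ds$, i.e.\ precisely the Fourier representation the paper uses. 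As written, the decisive step of your proof is asserted rather than proved.
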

 Corollary~\ref{densitycase} gives us together with the following lemma an exact series representation for the density function.
\begin{lemma}For $\nu \in \N$ we have
\begin{equation*}
\frac{d}{dx}Q_\nu^{X_t}(x)=\frac{1}{\sqrt{2\pi}}e^{-\frac{x^2}{2}}\sum H_{\nu+2l}(x)\prod_{m=1}^\nu \frac{1}{k_m!}\left ( \frac{\gamma^{X_t}_{m+2}}{(m+2)! V_{X_t}^{m+2}}\right )^{k_m},
\end{equation*}
with the notation of~(\ref{Q}).
\end{lemma}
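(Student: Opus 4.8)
The plan is to differentiate the sum defining $Q_\nu^{X_t}$ in~(\ref{Q}) term by term. Observe first that the summation there runs over the non-negative integer solutions $(k_1,\dots,k_\nu)$ of $k_1+2k_2+\dots+\nu k_\nu=\nu$, of which there are only finitely many (they are the partitions of $\nu$); hence no convergence question arises and termwise differentiation is legitimate. Moreover, for each such solution the coefficient $\prod_{m=1}^\nu \frac{1}{k_m!}\left(\frac{\gamma^{X_t}_{m+2}}{(m+2)!V_{X_t}^{m+2}}\right)^{k_m}$ is constant in $x$, as is the prefactor $-\frac{1}{\sqrt{2\pi}}$. So the whole computation reduces to evaluating $\frac{d}{dx}\bigl(e^{-x^2/2}H_n(x)\bigr)$ for a single Hermite index, namely $n=\nu+2l-1$ with $l=\sum_{j=1}^\nu k_j$.

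The key identity is
\[
\frac{d}{dx}\left(e^{-x^2/2}H_n(x)\right)=-e^{-x^2/2}H_{n+1}(x),\qquad n\in\N.
\]
There are two quick routes to it. First, from the definition $H_n(x)=(-1)^n e^{x^2/2}\frac{d^n}{dx^n}e^{-x^2/2}$ one has $e^{-x^2/2}H_n(x)=(-1)^n\frac{d^n}{dx^n}e^{-x^2/2}$; differentiating once more and re-inserting the definition of $H_{n+1}$ gives $(-1)^n\frac{d^{n+1}}{dx^{n+1}}e^{-x^2/2}=(-1)^n(-1)^{n+1}e^{-x^2/2}H_{n+1}(x)=-e^{-x^2/2}H_{n+1}(x)$. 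Alternatively, using the two Hermite identities recorded above, $\frac{d}{dx}\left(e^{-x^2/2}H_n(x)\right)=e^{-x^2/2}\left(H'_n(x)-xH_n(x)\right)=e^{-x^2/2}\left(nH_{n-1}(x)-xH_n(x)\right)=-e^{-x^2/2}H_{n+1}(x)$, the last equality being the three-term recursion $H_{n+1}(x)=xH_n(x)-nH_{n-1}(x)$.

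Applying this identity to each summand with $n=\nu+2l-1$, the prefactor $-\frac{1}{\sqrt{2\pi}}$ combines with the minus sign produced by the identity to give $+\frac{1}{\sqrt{2\pi}}$, and $H_{\nu+2l-1}$ is replaced by $H_{\nu+2l}$; reattaching the (unchanged) coefficients yields exactly the asserted formula. There is no genuine obstacle here: the only points requiring a little care are keeping track of the sign in the Hermite differentiation identity and recording the finiteness of the index set so that differentiation may indeed be carried out term by term.
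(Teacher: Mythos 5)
Your proof is correct and follows essentially the same route as the paper: the paper also applies the product rule to each (finitely many) summand and uses $H'_n(x)=nH_{n-1}(x)$ together with the three-term recursion $H_{n+1}(x)=xH_n(x)-nH_{n-1}(x)$ to shift the Hermite index from $\nu+2l-1$ to $\nu+2l$, which is exactly your second derivation of the identity $\frac{d}{dx}\bigl(e^{-x^2/2}H_n(x)\bigr)=-e^{-x^2/2}H_{n+1}(x)$.
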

\begin{proof}
\begin{align*}
\frac{d}{dx}Q_\nu^{X_t}(x)=&\left (\frac{d}{dx}\left ( -\frac{1}{\sqrt{2\pi}} e^{-\frac{x^2}{2}}\right )\right )\sum H_{\nu+2l-1}(x)\prod_{m=1}^\nu \frac{1}{k_m!}\left ( \frac{\gamma^{X_t}_{m+2}}{(m+2)! V_{X_t}^{m+2}}\right)^{k_m}\\&-\frac{1}{\sqrt{2\pi}}e^{-\frac{x^2}{2}}\sum \frac{d}{dx}H_{\nu+2l-1}(x)\prod_{m=1}^\nu \frac{1}{k_m!}\left ( \frac{\gamma^{X_t}_{m+2}}{(m+2)!V_{X_t}^{m+2}}\right)^{k_m}\\=&\frac{1}{\sqrt{2\pi}}e^{-\frac{x^2}{2}}\sum (x H_{\nu+2l-1}(x)-(\nu+2l-1)H_{\nu+2l-2}(x))\times\\ &\prod_{m=1}^\nu \frac{1}{k_m!}\left (\frac{\gamma^{X_t}_{m+2}}{(m+2)!V_{X_t}^{m+2}} \right )^{k_m}\\=&\frac{1}{\sqrt{2\pi}}e^{-\frac{x^2}{2}}\sum H_{\nu+2l}(x)\prod_{m=1}^\nu \frac{1}{k_m!}\left ( \frac{\gamma^{X_t}_{m+2}}{(m+2)! V_{X_t}^{m+2}}\right )^{k_m}.
\end{align*}
In the last step, we used the recursion formula for the Hermite polynomials.
\end{proof}

\begin{remark}
The approximation results of this section such as Theorem~\ref{main} give exact series representation also for any infinitely divisible distribution satisfying the conditions for $X_1$ in each theorem, since any infinitely divisible distribution can be considered as the one dimensional distribution of some L\'{e}vy process at time $1$.
\end{remark}
\section{Insurance Applications}
\label{applications}
Let us consider briefly L\'{e}vy processes with only positive (respectively negative) jumps and drift term. This is a reasonable class for risk processes, more precisely claim surplus processess in the sense of~\citet{asmussen}. This class includes spectrally positive (negative) L\'{e}vy processes without gaussian component in the sense of~\citet{kyprianou}.
\begin{remark}[Risk process]
Consider a L\'{e}vy process $X$ satisfying conditions of Theorem \ref{main}, \ref{main2} or \ref{main3}. Furthermore, assume that its L\'{e}vy measure is concentrated on positive reals and satisfies $\int_{\R \backslash \{0\}} |x|\mu(dx)<\infty$. Then there is some $x_1 \in \R$ s.t. $\Pro(X_t < x_1 V_{X_t})=0$ for all $t>0$ and thus we get easily a series representation for $\Pro(X_t < x_2 V_{X_t})$ alone.
\end{remark}
\begin{remark}
Throughout the paper we have assumed that the process is centered. This assumption is only technical since we can write for non-centered $X_t$ that
\begin{equation*}
\Pro(X_t<x)=\Pro(X_t-\E X_t<y V_{X_t-\E X_t }),
\end{equation*}
where
\begin{equation*}
y=\frac{x-\E X_t}{V_{X_t-\E X_t}}=\frac{x-\E X_t}{\sqrt{\Var{X_t}}}.
\end{equation*}
\end{remark}
The Edgeworth approximation is widely used in insurance applications \citep{beard,asmussen}. Present results justify the use of Edgeworth expansion of any order to approximate the claim surplus process in a L\'{e}vy driven model. By increasing the order of the approximation we will asymptotically get rid of the error term. The series representation can be written for all $t>0$. Naturally we will have to take more correction terms $Q_\nu^{X_t}(x)$ into account if $t$ is small or $|x|$ is large to get sharp estimates.

If there exist only $k$ first moments (the heavy tailed case), Corollary~\ref{corolla} tells to what extent one can refine the approximation.

Even the most restrictive case ot the main result of this article, Theorem~\ref{main} can be justified by actuarial reasoning. The bounded support of the L\'{e}vy measure corresponds to the case that the insurer has arranged an excess-of-loss reinsurance~\citep{asmussen}.
\section{Proofs}
\label{proofs}
The following lemma gives us a representation formula for the cumulants of a L\'{e}vy process. The result may be well known but it is included in this paper for convenience. It is worth mentioning that Cram\'{e}r's condition is not assumed in the following lemma. The condition~(\ref{intcond}) is used in the literature e.g. by~\citet{nualartschoutens}. This condition is enough to guarantee the existence of all moments. On the other hand, processes satisfying the assumptions of Theorem~\ref{main}, \ref{main2} or \ref{main3} also satisfy condition~(\ref{intcond}).
\begin{lemma}
\label{cumulantlemma}
Let $(\sigma^2,\rho,\mu)$ be the characteristic triplet of $X$. Furthermore, assume that for some $\lambda>0$ and for all $\delta>0$
\begin{equation}
\label{intcond}
\int_{\R \backslash{(-\delta,\delta)}} e^{\lambda |x|}\mu(dx)<\infty.
\end{equation}
Then
\begin{equation*}
\gamma_\nu^{X_1}=\int_{\R \backslash \{0\}} x^\nu \mu(dx),\quad \nu\geq 3.
\end{equation*}
and
\begin{equation*}
\gamma_2^{X_1}=\int_{\R \backslash \{0\}} x^2\mu(dx)+\sigma^2.
\end{equation*}
\end{lemma}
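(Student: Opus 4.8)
The plan is to compute the cumulants directly from the identity $\gamma_\nu^{X_1}=i^{-\nu}\psi^{(\nu)}(0)$, where $\psi$ is the L\'{e}vy--Khinchine exponent
$$\psi(s)=-\tfrac{1}{2}\sigma^2s^2+i\rho s+\int_{\R\backslash\{0\}}\bigl(e^{isu}-1-isu1_{\{|u|\leq1\}}\bigr)\mu(du).$$
The only genuine issue is the legitimacy of differentiating the integral term under the integral sign near $s=0$; once that is in place, the formulas drop out by inspection.

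First I would record that condition~(\ref{intcond}), together with the L\'{e}vy measure condition $\int_{\R\backslash\{0\}}\min(u^2,1)\,\mu(du)<\infty$, forces $\int_{\R\backslash\{0\}}|u|^\nu\,\mu(du)<\infty$ for every integer $\nu\geq2$, and also $\int_{\{|u|>1\}}|u|\,\mu(du)<\infty$. Indeed, on $\{0<|u|\leq1\}$ we have $|u|^\nu\leq u^2$ for $\nu\geq2$, so that piece is controlled by $\int\min(u^2,1)\,\mu(du)$; and on $\{|u|>\delta\}$, $|u|^\nu\leq C_{\nu,\delta}\,e^{\lambda|u|}$, so that piece is controlled by~(\ref{intcond}). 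Only the first absolute moment near the origin may diverge, which is exactly the role of the compensator $-isu1_{\{|u|\leq1\}}$.

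Second I would show that $\psi\in C^\infty$ on a fixed neighbourhood of $0$, say $\{|s|<1\}$, with $\psi^{(\nu)}$ obtained by differentiating under the integral. Write $g(s,u)=e^{isu}-1-isu1_{\{|u|\leq1\}}$. Then $\partial_s g(s,u)=iu(e^{isu}-1)1_{\{|u|\leq1\}}+iue^{isu}1_{\{|u|>1\}}$, which for $|s|\leq1$ is bounded by the $\mu$-integrable function $u^2 1_{\{|u|\leq1\}}+|u|1_{\{|u|>1\}}$, and $\partial_s^k g(s,u)=(iu)^k e^{isu}$ for $k\geq2$, bounded by the $\mu$-integrable function $|u|^k$; also $g$ itself is dominated by $\tfrac12 u^2 1_{\{|u|\leq1\}}+2\cdot 1_{\{|u|>1\}}$. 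By the standard dominated-convergence criterion for differentiation under the integral sign, applied inductively, for every $\nu\geq1$
$$\psi^{(\nu)}(s)=\frac{d^\nu}{ds^\nu}\Bigl(-\tfrac{1}{2}\sigma^2s^2+i\rho s\Bigr)+\int_{\R\backslash\{0\}}\partial_s^\nu g(s,u)\,\mu(du),\qquad |s|<1.$$
Evaluating at $s=0$: for $\nu\geq2$ the compensator is annihilated, so $\partial_s^\nu g(0,u)=(iu)^\nu$, while the quadratic part contributes $-\sigma^2$ when $\nu=2$ and $0$ when $\nu\geq3$, and the linear part contributes $0$ for $\nu\geq2$. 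Hence $\psi^{(\nu)}(0)=\int_{\R\backslash\{0\}}(iu)^\nu\,\mu(du)$ for $\nu\geq3$, giving $\gamma_\nu^{X_1}=i^{-\nu}\int(iu)^\nu\,\mu(du)=\int_{\R\backslash\{0\}}u^\nu\,\mu(du)$; and $\psi^{(2)}(0)=-\sigma^2-\int u^2\,\mu(du)$, so $\gamma_2^{X_1}=i^{-2}\psi^{(2)}(0)=\sigma^2+\int_{\R\backslash\{0\}}u^2\,\mu(du)$.

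I expect the main obstacle to be the second step, namely exhibiting honest $\mu$-integrable dominating functions for $g$ and its first two $s$-derivatives near $u=0$, where the subtracted terms make the integrand small of order $u^2$; this is precisely where $\int\min(u^2,1)\,\mu(du)<\infty$ is used, with~(\ref{intcond}) handling the large-$|u|$ tails. Everything else is bookkeeping with powers of $i$.
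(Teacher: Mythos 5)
Your proof is correct. Note, however, that the paper does not actually supply a proof of this lemma: it only remarks that the result follows from ``a straightforward computation using the L\'{e}vy--It\^{o} decomposition'' and omits the details. The route hinted at there is probabilistic: one splits $X$ into its independent Gaussian, drift, compensated-small-jump and large-jump components and adds the cumulants of the pieces. You instead work directly on the L\'{e}vy--Khinchine exponent $\psi$ and justify $\gamma_\nu^{X_1}=i^{-\nu}\psi^{(\nu)}(0)$ by differentiation under the integral sign, with the dominating functions $u^2 1_{\{|u|\leq 1\}}+|u|1_{\{|u|>1\}}$ for the first derivative and $|u|^k$ for $k\geq 2$, the near-origin integrability coming from $\int\min(u^2,1)\,\mu(du)<\infty$ and the tails from~(\ref{intcond}). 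This is arguably the more natural argument here, since the cumulants are defined in this paper precisely via derivatives of $\log v_{X_1}$ at $0$, and it avoids invoking the L\'{e}vy--It\^{o} theorem; the probabilistic route, on the other hand, gives the same formulas with essentially no analysis once one accepts the decomposition and the additivity of cumulants over independent summands. The one point you use tacitly and could flag explicitly is that $\log v_{X_1}(s)$ (the distinguished logarithm appearing in the definition of the cumulants) coincides with $\psi(s)$ near $s=0$, which holds because $v_{X_1}$ is a non-vanishing continuous characteristic function equal to $1$ at the origin. With that remark added, your argument is complete.
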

The proof is a straightforward computation using L\'{e}vy-It\^{o} decomposition and it is omitted. 
The next lemma gives us another characterisation of the condition on the L\'{e}vy measure in Theorem~\ref{main}. From now on in this article, we will use the following notation of scaled cumulants $\lambda_\nu^{X_t}=\frac{\gamma_\nu^{X_t}}{V_{X_t}^\nu}$, for $\nu \in \N$.
\begin{lemma}
\label{boundedsupport}
The L\'{e}vy measure of process $X$ is concentrated on some bounded interval is equivalent to the condition that there exists some $C>0$ s.t.
\begin{equation*}
\lambda_\nu^{X_1}\leq C^\nu, \quad \text{for all }\nu\in\N.
\end{equation*}
\end{lemma}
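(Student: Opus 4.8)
The plan is to prove the two implications separately; in both the crux is the integral representation of the cumulants. First I record the preliminaries. The standing assumptions force $X_1$ to be non-degenerate (Cram\'er's condition fails for a constant), so $V_{X_1}^2=\E X_1^2=\gamma_2^{X_1}>0$ and the $\lambda_\nu^{X_1}$ are well defined, with $\lambda_1^{X_1}=\E X_1/V_{X_1}=0$ and $\lambda_2^{X_1}=1$. Since $X_1$ has moments of all orders, $\int_{|u|>1}|u|^\nu\,\mu(du)<\infty$ for every $\nu$, so one may differentiate $\psi$ under the integral sign the required number of times to obtain $\gamma_\nu^{X_1}=\int_{\R\setminus\{0\}}u^\nu\,\mu(du)$ for $\nu\ge 3$ (this is the computation behind Lemma~\ref{cumulantlemma}, for which polynomial moments already suffice in place of~(\ref{intcond})); in particular the even cumulants $\gamma_{2m}^{X_1}$ are nonnegative.

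For the forward implication, suppose $\mu$ is concentrated on $[-R,R]$. Then $\int_{0<|u|\le R}u^2\,\mu(du)$ is finite — combine $\int\min(u^2,1)\,\mu(du)<\infty$ with the fact that $\mu$ is Radon, hence finite on the compact set $\{1\le|u|\le R\}$ — and equals $\gamma_2^{X_1}-\sigma^2\le V_{X_1}^2$. For $\nu\ge 3$, the elementary bound $|u|^\nu\le R^{\nu-2}u^2$ on $[-R,R]$ gives
\[
|\gamma_\nu^{X_1}|\le\int_{0<|u|\le R}|u|^\nu\,\mu(du)\le R^{\nu-2}\int_{0<|u|\le R}u^2\,\mu(du)\le R^{\nu-2}V_{X_1}^2,
\]
hence $|\lambda_\nu^{X_1}|\le (R/V_{X_1})^{\nu-2}$, and $C=\max\{1,R/V_{X_1}\}$ works for every $\nu$.

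For the converse, suppose $\lambda_\nu^{X_1}\le C^\nu$ for all $\nu$ and fix $R>\max\{1,CV_{X_1}\}$. Applying a Chebyshev-type estimate with the even cumulants, for every $m\ge 2$,
\[
\mu(\{|u|>R\})\le\frac{1}{R^{2m}}\int_{|u|>R}u^{2m}\,\mu(du)\le\frac{\gamma_{2m}^{X_1}}{R^{2m}}\le\Bigl(\frac{CV_{X_1}}{R}\Bigr)^{2m},
\]
where $\mu(\{|u|>R\})<\infty$ since $\int_{|u|>1}u^2\,\mu(du)\le\gamma_2^{X_1}<\infty$. As $CV_{X_1}/R<1$, letting $m\to\infty$ forces $\mu(\{|u|>R\})=0$, i.e.\ $\mu$ is concentrated on $[-R,R]$. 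The main obstacle is the preliminary step: in the converse direction one does not yet know that $\mu$ has bounded support, so one cannot invoke the exponential-moment hypothesis~(\ref{intcond}) of Lemma~\ref{cumulantlemma} directly, and must instead observe that the polynomial moments guaranteed by the standing assumption already license the differentiation under the integral sign; everything after that is routine.
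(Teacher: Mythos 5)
Your proof is correct, and in the harder direction it takes a cleaner route than the paper. For the implication from the cumulant bound to bounded support, the paper first upgrades the bound on the even cumulants to a bound $\int_{\R\backslash\{0\}}|x|^\nu\,\mu(dx)\le D^\nu$ for \emph{all} $\nu\ge 4$ via the $L^p(\mu)$-norm interpolation $\|x\|_{2n+1}\le\max{(\|x\|_{2n},\|x\|_{2n+2})}$ quoted from Rudin, and then invokes $\|x\|_\nu\rightarrow\|x\|_\infty$ to conclude $\|x\|_\infty\le D$. You bypass the odd moments entirely and apply a Chebyshev estimate directly to the even moments, $\mu(\{|u|>R\})\le\gamma_{2m}^{X_1}/R^{2m}\le(CV_{X_1}/R)^{2m}\rightarrow 0$; this is essentially the standard proof of the $L^p\rightarrow L^\infty$ limit specialised to exactly what is needed, so it is more elementary and self-contained, and it also makes transparent why the one-sided hypothesis $\lambda_\nu^{X_1}\le C^\nu$ suffices (only the nonnegative even cumulants enter). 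In the easy direction your pointwise bound $|u|^\nu\le R^{\nu-2}u^2$ sidesteps a small blemish in the paper, whose choice $C=V_{X_1}^{-1}\sup_\nu\|x\|_\nu$ can be infinite when $\int_{0<|x|<1}|x|\,\mu(dx)=\infty$ (the $\nu=1$ norm is irrelevant since $\gamma_1^{X_1}=0$, but the paper's formula does not exclude it). Finally, you are right to flag that Lemma~\ref{cumulantlemma} is stated under the exponential-moment condition~(\ref{intcond}), which is not available a priori in the converse direction; the paper cites that lemma without comment, whereas your observation that the polynomial moments guaranteed by the standing assumptions already justify $\gamma_\nu^{X_1}=\int_{\R\backslash\{0\}}x^\nu\,\mu(dx)$ for $\nu\ge 3$ closes that gap.
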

\begin{proof}
Let us first assume that such $C$ exists. Now we can use Lemma~\ref{cumulantlemma} and we get for $\nu\geq 3$ that
\begin{equation*}
\int_{\R\backslash \{0\}}x^\nu \mu(dx)\leq C^\nu V_{X_1}^\nu.
\end{equation*}
For even $\nu$, $|\gamma_\nu|=\gamma_\nu$. We know also by \citet{rudin} page 71 that it holds for $L^p(\mu)$ norms that
\begin{equation*}
||x||_{2n+1}\leq \max{(||x||_{2n},||x||_{2n+2})}, \quad \text{for } n\geq 1.
\end{equation*}
Hence there is some $D>0$ s.t. $\int_{\R \backslash \{0\}}|x|^\nu \mu(dx)\leq D^\nu$ for all $\nu \geq 4$. Moreover, we get
\begin{equation*}
D \geq ||x||_\nu\rightarrow ||x||_\infty \quad \text{as }\nu \rightarrow \infty.
\end{equation*}
Now $||\frac{x}{D}||_\infty\leq 1$ with respect to $\mu$. In other words, $\mu$ is concentrated on some bounded interval.

The other way is even simpler. Because $\mu$ is concentrated on some bounded interval, it follows that $||x||_\infty<\infty$. We can choose $C=\frac{1}{V_{X_1}}\sup_{\nu}||x||_\nu$.
\end{proof}
Now we have everything ready for the proofs of the main results.
\begin{proof}(Theorem~\ref{main})

Let us first work out the representation for the logarithm of the characteristic function i.e. the characteristic exponent  of the L\'{e}vy process.
\begin{align*}
&\sum_{\nu=2}^\infty \left|\frac{\lambda_\nu^{X_t}}{\nu!}(is)^\nu\right|=\sum_{\nu=2}^\infty \left | \frac{1}{\nu!}\frac{t \gamma_\nu^{X_1}}{t^{\frac{\nu}{2}}V_{X_1}^\nu}(is)^\nu\right |\\=&\sum_{\nu=2}^\infty \left | t^{-\frac{\nu-2}{2}}\frac{1}{\nu!}\frac{\gamma_\nu^{X_1}}{V_{X_1}^\nu}(is)^\nu\right |\\ =&\sum_{\nu=2}^\infty \left | t\frac{\lambda_\nu^{X_1}}{\nu!}\left ( \frac{is}{\sqrt{t}}\right )^\nu\right |\leq t \sum_{\nu=2}^\infty \frac{1}{\nu !}\left | \frac{Cs}{\sqrt{t}}\right |^\nu,
\end{align*}
which is bounded when $t>\epsilon>0$ and $|s|<K<\infty$ for arbitrary $\epsilon,K \in (0,\infty)$. In the last step, we used the characterisation of Lemma~\ref{boundedsupport}. Now this series is dominated by the series expansion of the exponential function and thus the series
\begin{equation*}
\sum_{\nu=2}^\infty \frac{\lambda_\nu^{X_t}}{\nu!}(is)^\nu
\end{equation*}
converges to an analytic function of $s$ when $t>0$ is fixed. Now, define
\begin{equation*}
f_{X_t}(s)=v_{X_t}\left(\frac{s}{V_{X_t}}\right).
\end{equation*}
By computing the cumulants, this notation gives for $n \in \N$
\begin{align*}
& \left [\frac{d^n}{ds^n}\log f_{X_t}(s)\right ]_{s=0}=\left [\frac{d^n}{ds^n} \log v_{X_1}\left ( \frac{s}{\sqrt{t}V_{X_1} }\right )^t\right ]_{s=0}\\=&t\left ( \frac{1}{\sqrt{t}V_{X_1}}\right )^n \left [\frac{d^n}{ds^n}\log v_{X_1}(s)\right]_{s=0}\\ =& t^{-\frac{n-2}{2}}i^n\frac{\gamma_n^{X_1}}{V_{X_1}^n}=t^{-\frac{n-2}{2}}i^n \lambda_n^{X_1}=i^n \lambda_n^{X_t}.
\end{align*}
Now
\begin{equation*}
\log f_{X_t}(s)=\sum_{\nu=2}^\infty \frac{\lambda_\nu^{X_1}}{\nu !}t^{-\frac{\nu-2}{2}}(is)^\nu.
\end{equation*}
We observe that $\lambda_2^{X_t}=1$ for all $t>0$. So we obtain
\begin{equation*}
f_{X_t}(s)=e^{-\frac{s^2}{2}}\exp \left ( \sum_{j=1}^\infty \frac{\lambda_{j+2}^{X_1}}{(j+2)!}t^{-\frac{j}{2}}(is)^{j+2}\right ).
\end{equation*}
Next, consider a more general form
\begin{equation*}
\exp{\left ( \sum_{j=1}^\infty \frac{\lambda^{X_1}_{j+2}}{(j+2)!}z^j u^{j+2}\right )}.
\end{equation*}
With fixed $u$, this series converges absolutely, uniformly in any compact set with respect to the parameter $z$. Thus in every compact set with respect to $z$, we rearrange the series of the exponential function and get a series representation with respect to $z$. Hence,
\begin{equation*}
\exp \left ( \sum_{j=1}^\infty \frac{\lambda_{j+2}^{X_1}}{(j+2)!}z^j u^{j+2}\right )= 1 + \sum_{\nu=1}^\infty P_\nu(u)z^\nu
\end{equation*}
for some polynomials $(P_\nu)_{\nu=1}^\infty$ that can be computed formally by compounding these two series, which is possible due to the absolute convergence. Now
\begin{equation*}
f_{X_t}(s)=e^{-\frac{s^2}{2}}+\sum_{\nu=1}^\infty P_\nu(is)e^{-\frac{s^2}{2}}t^{-\frac{\nu}{2}}.
\end{equation*}
By the inversion formula of the characteristic function \citep{petrov1}, we get for $x_1,x_2$ points of continuity of $\Pro(X_t<\cdot V_{X_t})$
\begin{align*}
&\Pro(X_t<x_2 V_{X_t})-\Pro(X_t<x_1 V_{X_t})\\ =& \frac{1}{2\pi}\lim_{T\rightarrow \infty}\int_{-T}^T \frac{e^{-isx_2}-e^{-isx_1}}{-is}\left ( e^{-\frac{s^2}{2}}+\sum_{\nu=1}^\infty P_\nu(is) e^{-\frac{s^2}{2}}t^{-\frac{\nu}{2}}\right )ds.
\end{align*}
With fixed $t>0$, the series inside the integral is absolutely convergent uniformly in compact sets with respect to $s$. Thus the integral is always well-defined and can be computed term-wise. Moreover, the limit exists since
\begin{align*}
&\int_{-\infty}^\infty \frac{e^{-isx_2}-e^{-isx_1}}{-is}f_{X_t}(s)ds\\&-\int_{-T}^T \frac{e^{-isx_2}-e^{-isx_1}}{-is}\left ( e^{-\frac{s^2}{2}}+\sum_{\nu=1}^\infty P_\nu(is)e^{-\frac{s^2}{2}}t^{-\frac{\nu}{2}}\right )ds\\=&\int_{|s|>T} \frac{e^{-isx_2}-e^{-isx_1}}{-is}f_{X_t}(s)ds\rightarrow 0, \quad \text{when } T\rightarrow \infty,
\end{align*}
since $f_{X_t}$ is characteristic function of $\frac{X_t}{V_{X_t}}$.

Hence, there are such functions $(R_\nu)_{\nu=1}^\infty$ that we can write
\begin{equation*}
\Pro(X_t<x_2V_{X_t})-\Pro(X_t<x_1V_{X_t})=\Phi(x_2)-\Phi(x_1)+\sum_{\nu=1}^\infty \left (R_\nu(x_2)-R_\nu(x_1)\right)t^{-\frac{\nu}{2}}.
\end{equation*}
We use the classical Theorem~\ref{iid-case} and the scaling Lemma~\ref{scaling} and find out that for all $\nu=1,2,\dots$
\begin{equation*}
R_\nu(x)=Q_\nu^{X_1}(x)=t^{\frac{\nu}{2}}Q_{\nu}^{X_t}(x).
\end{equation*}
\end{proof}

\begin{proof}(Theorem~\ref{main2})

The proof proceeds analogously to the proof of Theorem~\ref{main} but we have to argue why we can rearrange the series of
\begin{equation}
\label{charfct}
f_{X_t}(s)=e^{-\frac{s^2}{2}}\exp{\left ( \sum_{j=1}^\infty \frac{\lambda_{j+2}^{X_1}}{(j+2)!}t^{-\frac{j}{2}}(is)^{j+2}\right )}.
\end{equation}
With present assumptions on the L\'{e}vy measure $\mu$, we can use the representation Lemma~\ref{cumulantlemma} for the cumulants. Let $m\in \N$ be such that $\frac{1}{m}\leq \epsilon$. Observe now that
\begin{align*}
&\int_0^\infty x^ne^{-x^{1+\frac{1}{m}}}dx=\int_0^\infty -\frac{m}{m+1}x^{n-\frac{1}{m}} \left ( -\frac{m+1}{m}x^{\frac{1}{m}}e^{-x^{1+\frac{1}{m}}}\right )dx\\
=& -\frac{m}{m+1}\left [ x^{n-\frac{1}{m}}e^{-x^{1+\frac{1}{m}}}\right ]_0^\infty +\int_0^\infty \frac{m}{m+1}\left ( n-\frac{1}{m}\right )x^{n-1-\frac{1}{m}}e^{-x^{1+\frac{1}{m}}}dx\\ =&\int_0^\infty \left ( \frac{m}{m+1}\right )^2\left ( n-\frac{1}{m}\right )\left ( n-1-\frac{2}{m}\right )x^{n-2-\frac{2}{m}}e^{-x^{1+\frac{1}{m}}}dx\\ =& \left ( \frac{m}{m+1}\right )^{\lfloor n \frac{m}{m+1}\rfloor}\prod_{j=1}^{\lfloor n \frac{m}{m+1}\rfloor}\left ( n+1-j\left ( 1+\frac{1}{m}\right )\right )\times \int_0^\infty x^{n-\lfloor n \frac{m}{m+1}\rfloor\left ( \frac{m+1}{m}\right )}e^{-x^{1+\frac{1}{m}}}dx\\\leq & \prod_{j=1}^{\lfloor n \frac{m}{m+1}\rfloor}\left ( n+1-j\left(1+\frac{1}{m}\right)\right )\times D,
\end{align*}
where
\begin{equation*}
D=\max_{l=0,\dots,m} \int_0^\infty x^{l-\lfloor l \frac{m}{m+1}\rfloor\left ( \frac{m+1}{m} \right)}e^{-x^{1+\frac{1}{m}}}dx.
\end{equation*}
Note that the constant $D$ is finite and does not depend on $n$. Without loss of generality, we can assume $\tilde{X}$ to be compensated compound Poisson process with $a=0$, since we can express general $X$ as a sum of this kind of process and a process satisfying the conditions of Theorem~\ref{main}. Then we get a bound for~(\ref{charfct}) by the additivity of cumulants.

Note that this decomposition can be made such a way that Cram\'{e}r's condition does not fail here if the L\'{e}vy measure has unbounded support. This is due to the fact that the tail of the L\'{e}vy measure is absolutely continuous with respect to Lebesgue measure. Now we have
\begin{align*}
&\sum_{\nu=2}^\infty \left | \frac{\gamma_\nu^{\tilde{X}_t}}{V_{X_t}^\nu\nu!}(is)^\nu\right |\\=&\sum_{\nu=2}^{m}\left | \frac{\gamma_\nu^{\tilde{X}_t}}{V_{X_t}^\nu\nu !} (is)^\nu\right |+t\sum_{\nu=m+1}^\infty \frac{1}{\nu !} \left|\gamma_\nu^{\tilde{X}_1}\right|\left ( \frac{|s|}{\sqrt{t}V_{X_1}}\right )^\nu \\ =& \sum_{\nu=2}^{m}\left | \frac{\gamma_\nu^{\tilde{X}_t}}{V_{X_t}^\nu \nu !}(is)^\nu\right |+t\sum_{j=1}^\infty \sum_{k=0}^{m} \frac{1}{((m+1)j+k)!}\left | \gamma_{(m+1)j+k}^{\tilde{X}_1}\right |\left ( \frac{|s|}{\sqrt{t}V_{X_1}}\right )^{(m+1)j+k}.
\end{align*}
The first term is a finite sum of finite summands if $0<t<\infty$. We get an estimate for the other sum as follows
\begin{align*}
&t\sum_{j=1}^\infty \sum_{k=0}^{m}\frac{1}{((m+1)j+k)!}\left | \gamma_{(m+1)j+k}^{\tilde{X}_1}\right |\left ( \frac{|s|}{\sqrt{t}V_{X_1}}\right )^{(m+1)j+k}\\ \leq & t \sum_{j=1}^\infty \sum_{k=0}^{m} \frac{1}{((m+1)j+k)!}\times \\&2CD \prod_{l=1}^{\lfloor ((m+1)j+k)\frac{m}{m+1}\rfloor}\left ( (m+1)j+k+1-l\left ( 1+\frac{1}{m}\right )\right )\left ( \frac{|s|}{\sqrt{t}V_{X_1}}\right )^{(m+1)j+k}.
\end{align*}
Now define
\begin{equation*}
g(l)=(m+1)j+k+1-l-\left \lfloor \frac{l}{m}\right \rfloor, \quad l=1,\dots,\left \lfloor((m+1)j+k)\frac{m}{m+1} \right \rfloor.
\end{equation*}
We observe that $g(l)>g(l+1)$ and the values of $g$ are integers from $1$ to $(m+1)j+k$. Nevertheless, $g$ does not take every $(m+1)$th integer value. This fact is due to the jump of the floor function. So there is at least $j$ terms missing in the product. By assuming them to be the $j$ smallest ones, we get a rough estimate
\begin{equation*}
\prod_{l=1}^{\lfloor ((m+1)j+k)\frac{m}{m+1}\rfloor}\left ( (m+1)j+k+1-l\left (1+\frac{1}{m} \right )\right )\leq \frac{((m+1)j+k)!}{j!}.
\end{equation*}
And finally
\begin{align*}
&t\sum_{j=1}^\infty \sum_{k=0}^{m}\frac{1}{((m+1)j+k)!}\left | \gamma_{(m+1)j+k}^{\tilde{X}_1}\right |\left ( \frac{|s|}{\sqrt{t}V_{X_1}}\right )^{(m+1)j+k}\\\leq& t \sum_{j=1}^\infty \sum_{k=0}^{m}\frac{1}{j!}2CD\left ( \frac{|s|}{\sqrt{t}V_{X_1}}\right )^{(m+1)j+k}\\\leq & 2CDt \left ( \sum_{k=0}^{m}\left ( \frac{|s|}{\sqrt{t}V_{X_1}}\right )^k\right )\sum_{j=1}^\infty \frac{1}{j!}\left ( \left ( \frac{|s|}{\sqrt{t}V_{X_1}}\right )^{m+1}\right )^j< \infty,
\end{align*}
as an exponential series when $0<t<\infty$. The last part of the proof is analogous to the proof of Theorem~\ref{main}.
\end{proof}

\begin{proof}(Theorem~\ref{main3})

We have to get a suitable estimate for the cumulants from above to be able to continue as in the proof of Theorem~\ref{main2}. Let us define function $\eta$ on positive reals as follows
\begin{equation*}
\eta (x)=\mu((-x-1,-x],[x,x+1)).
\end{equation*}
We can easily represent the growing condition for the L\'{e}vy measure using this function. Now we can estimate the cumulants in the spirit of Lemma~\ref{cumulantlemma}. Without loss of generality, we can assume that $a\geq 1$. We get
\begin{equation*}
\int_{-\infty}^\infty |x|^\nu \mu(dx)\leq \int_{-a}^a |x|^\nu \mu(dx)
+\sum_{j=0}^\infty |j+a+1|^\nu \eta(a+j).
\end{equation*}
For $\nu\geq 2$, the first term is bounded by $D^\nu$ for some $D>0$. For the second term we get
\begin{equation*}
\sum_{j=0}^\infty |j+a+1|^\nu \eta(a+j)\leq \int_a^{\infty} (x+2)^\nu C e^{-x^{1+\epsilon}}dx\leq C \int_a^\infty (3x)^\nu e^{-x^{1+\epsilon}}dx.
\end{equation*}
The rest of the proof is analogous to the proof of Theorem~\ref{main2}.
\end{proof}

\begin{proof}(Corollary~\ref{densitycase})

Let $(P_\nu)_{\nu=1}^\infty$ be the same polynomials as in the proof of Theorem~\ref{main}. We can use the series representation for characteristic function of $\frac{X_t}{V_{X_t}}$ and get
\begin{equation*}
\frac{1}{2\pi}\int_{\R} e^{-isx}f_{X_t}(s)ds=\frac{1}{2\pi}\int_{\R}e^{-isx}\left ( e^{-\frac{s^2}{2}}+\sum_{\nu=1}^\infty P_\nu(is)e^{-\frac{s^2}{2}}t^{-\frac{\nu}{2}}\right)ds.
\end{equation*}
With fixed $t>0$, the absolute convergence is uniform in compact sets with respect to $s$, as in the preceeding proofs. Thus, the integral is well-defined and can be computed term-wise. Moreover, with fixed $x\in \R$
\begin{equation*}
\int_{|s|>T}e^{-isx}f_{X_t}(s)ds\rightarrow 0, \quad \text{as }T\rightarrow \infty,
\end{equation*}
since $f_{X_t}$ is a characteristic function of some random variable with density function. Hence,
\begin{align*}
&\lim_{T \rightarrow \infty} \left | g_{X_t}(x)-\frac{1}{2\pi}\int_{-T}^T e^{-isx}\left ( e^{-\frac{s^2}{2}}+\sum_{\nu=1}^\infty P_\nu(is)e^{-\frac{s^2}{2}}t^{-\frac{\nu}{2}}\right )ds\right |\\=&\lim_{T\rightarrow \infty} \left|\frac{1}{2\pi}\int_{|s|>T}e^{-isx}f_{X_t}(s)ds \right|=0.
\end{align*}
We have shown that there is some series representation but we still have to show that the limit equals to what is claimed. We have
\begin{equation*}
\frac{1}{2\pi}\int_{\R}e^{-isx}P_\nu(is)e^{-\frac{s^2}{2}}t^{-\frac{\nu}{2}}ds=-\frac{1}{2\pi}\int_\R 
is 
\frac{e^{-isx}}{-is}P_\nu(is)e^{-\frac{s^2}{2}}t^{-\frac{\nu}{2}}ds=\frac{d}{dx}Q_\nu^{X_t}(x).
\end{equation*}
 \end{proof}

\section*{Acknowledgements}

I am grateful to my supervisor professor Esko Valkeila for his comments and guidance. My work has been funded by Finnish Academy of Science and Letters, Vilho, Yrj\"{o} and Kalle V\"{a}is\"{a}l\"{a} Foundation.







\bibliographystyle{elsart-harv}

\begin{thebibliography}{16}
\expandafter\ifx\csname natexlab\endcsname\relax\def\natexlab#1{#1}\fi
\expandafter\ifx\csname url\endcsname\relax
  \def\url#1{\texttt{#1}}\fi
\expandafter\ifx\csname urlprefix\endcsname\relax\def\urlprefix{URL }\fi

\bibitem[{Asmussen(2000)}]{asmussen}
Asmussen, S., 2000. Ruin Probabilities. Singapore: World Scientific.

\bibitem[{Asmussen and Rosi\'{n}ski(2001)}]{asmussenrosinski}
Asmussen, S., Rosi\'{n}ski, J., 2001. Approximations of small jumps of
  {L}\'{e}vy processes with a view towards simulation. J. Appl. Probab. 38,
  482--493.

\bibitem[{Beard et~al.(1977)Beard, Pentik\"{a}inen, and Pesonen}]{beard}
Beard, R.~E., Pentik\"{a}inen, T., Pesonen, E., 1977. Risk Theory The
  Stochastic Basis of Insurance. London: Chapman and Hall.

\bibitem[{Bertoin(1996)}]{bertoin}
Bertoin, J., 1996. L\'{e}vy Processes. Cambridge: Cambridge University Press.

\bibitem[{Cont and Tankov(2004)}]{cont}
Cont, R., Tankov, P., 2004. Financial Modelling With Jump Processes. Boca
  Raton: Chapman and Hall.

\bibitem[{Cram\'{e}r(1962)}]{cramer}
Cram\'{e}r, H., 1962. Random Variables and Probability Distributions.
  Cambridge: Cambridge University Press.

\bibitem[{Kl\"{u}ppelberg and Kyprianou(2006)}]{kluppelberg}
Kl\"{u}ppelberg, C., Kyprianou, A.~E., 2006. On extreme ruinous behaviour of
  {L}\'{e}vy insurance risk processes. J. Appl. Probab. 43~(2), 594--598.

\bibitem[{Kolassa(2006)}]{kolassa}
Kolassa, J.~E., 2006. Series Approximation Methods in Statistics. New York:
  Springer.

\bibitem[{Kyprianou(2006)}]{kyprianou}
Kyprianou, A.~E., 2006. Introductory Lectures on Fluctuations of L\'{e}vy
  Processes with Applications. Berlin: Springer.

\bibitem[{Nualart(1995)}]{nualart}
Nualart, D., 1995. The Malliavin Calculus and Related Topics. Springer, New
  York.

\bibitem[{Nualart and Schoutens(2000)}]{nualartschoutens}
Nualart, D., Schoutens, W., 2000. Chaotic and predictable representations for
  {L}\'{e}vy processes. Stochastic Process. Appl. 90, 109--122.

\bibitem[{Petrov(1975)}]{petrov2}
Petrov, V.~V., 1975. Sums of Independent Random Variables. Berlin:
  Springer-Verlag.

\bibitem[{Petrov(1995)}]{petrov1}
Petrov, V.~V., 1995. Limit Theorems of Probability Theory Sequences of
  Independent Random Variables. Oxford: Oxford Science Publications.

\bibitem[{Rudin(1987)}]{rudin}
Rudin, W., 1987. Real and Complex Analysis. Singapore: McGraw-Hill.

\bibitem[{Sato(1999)}]{sato}
Sato, K.-I., 1999. L\'{e}vy Processes and Infinitely Divisible Distributions.
  Cambridge: Cambridge University Press.

\bibitem[{Valkeila(1995)}]{valkeila}
Valkeila, E., 1995. On normal approximation of a process with independent
  increments. Russian Math. Surveys 50, 945--961.

\end{thebibliography}


\end{document}